\newtheorem{lemma}{Lemma}
\newtheorem{theorem}{Theorem}
\newtheorem{proposition}{Proposition}
\newtheorem{definition}{Definition}
\newtheorem{Ex}{Example}
\newtheorem{remark}{Remark}
\newtheorem{conj}{Conjecture}
\newtheorem*{theorem*}{Theorem}
\definecolor{dark green}{rgb}{0.0, 0.5, 0.0}
\providecommand{\customgenericname}{}
\newcommand{\newcustomtheorem}[2]{%
  \newenvironment{#1}[1]
  {%
   \renewcommand\customgenericname{#2}%
   \renewcommand\theinnercustomgeneric{##1}%
   \innercustomgeneric
  }
  {\endinnercustomgeneric}
}
\newcommand\addvmargin[1]{
  \node[fit=(current bounding box),inner ysep=#1,inner xsep=0]{};}
\begin{document}

\title{The breadth of Lie poset algebras}

\author[*]{Alex Cameron}
\author[**]{Vincent E. Coll, Jr.}
\author[**]{Nicholas Mayers}
\author[**]{Nicholas Russoniello}

\affil[*]{Department of Mathematics, Muhlenberg College, Allentown, PA, 18104}
\affil[**]{Department of Mathematics, Lehigh University, Bethlehem, PA, 18015}

\maketitle

\bigskip
\begin{abstract} 
\noindent
The breadth of a Lie algebra $L$ is defined to be the maximal dimension of the image of $ad_x=[x,-]:L\to L$, for $x\in L$. Here, we initiate an investigation into the breadth of three families of Lie algebras defined by posets and provide combinatorial breadth formulas for members of each family.
\end{abstract}

\section{Introduction}

\textit{Convention:}  We assume throughout that all Lie algebras are over an algebraically closed field of characteristic zero, \textbf{k}, which we may take, without any loss of generality, to be the complex numbers.
\medskip


The complete classification of simple Lie algebras, elegantly couched in the language of root systems and Dynkin diagrams, stands in sharp contrast to the solvable case where the Lie algebras are classified only up to dimension six (see \textbf{\cite{Graaf}}). Recent efforts have concentrated on the seemingly more tractable nilpotent case, where the study of nilpotent Lie algebras and their invariants is of topical interest.

One such invariant is the ``breadth" of a Lie algebra $L$, which is defined to be the maximal dimension of the image of $ad_x=[x,-]:L\rightarrow L$ as $x$ runs through the elements of $L$. This Lie-algebraic invariant was introduced by Leedham-Green, Neumann and Weigold (\textbf{\cite{Green}}, 1969). Some time later, Khuhirun et al. (\textbf{\cite{breadth1}}, 2014) characterized nilpotent Lie algebras of breadth one and two and  provided a full classification in breadth one.  In breadth two, they achieved a succinct classification only up to dimension six. This work was inspired by the recent interest in nilpotent Lie algebras and  analogous breadth work of Parmeggani and Stellmacher (\textbf{\cite{Parm}}, 1999) who gave a characterization of finite p-groups of breadth one and two (the breadth of a finite group is the cardinality of its largest conjugacy class). As a capstone to the work in \textbf{\cite{breadth1}}, Remm (\textbf{\cite{Remm}}, 2017) used characteristic sequences to complete, in particular, the classification of nilpotent Lie algebras of breadth two. Here, rather than studying Lie algebras with a particular, fixed breadth value, we initiate an investigation into explicit breadth formulas for three families of Lie algebras defined by posets: Lie poset algebras, type-A Lie poset algebras, and nilpotent Lie poset algebras. The last two are subalgebras of the first, but only the last of these is nilpotent.

Lie poset algebras are solvable subalgebras of  $\mathfrak{gl}(n)$ which arise naturally from the incidence algebras of posets (cf. \textbf{\cite{Ro}}) and can be defined as follows. For each poset $(\mathcal{P},\preceq_{\mathcal{P}})$ with $\mathcal{P}=\{1,\hdots,n\}$, one obtains a Lie algebra $\mathfrak{g}(\mathcal{P})$ consisting of $|\mathcal{P}|\times|\mathcal{P}|$ matrices whose $i,j$-entry can be nonzero if and only if $i\preceq_{\mathcal{P}} j,$ and the Lie bracket of $\mathfrak{g}(\mathcal{P})$ is given by $[X,Y]=XY-YX$, where juxtaposition denotes standard matrix multiplication. The imposition of various algebraic conditions on the members of $\mathfrak{g}(\mathcal{P})$ yields Lie poset algebras of classical type. For example, if a vanishing trace condition is applied, one obtains a type-A Lie poset algebra, which we denote by $\mathfrak{g}_{A} (\mathcal{P})$. Removing diagonal elements from $\mathfrak{g}(\mathcal{P})$ results in a nilpotent subalgebra of $\mathfrak{gl}(n)$ which, following \textbf{\cite{nilco}}, is denoted by $\mathfrak{g}^{\prec}(\mathcal{P})$ and referred to as a \textit{nilpotent Lie poset algebra.} See \textbf{\cite{CM, nil, CG, nilco}}.

The organization of the paper is as follows. After covering some preliminaries in Section~\ref{sec:prelim}, our main results are detailed in Section~\ref{sec:results}. Theorem~\ref{thm:first} establishes that the breadth of $\mathfrak{g}(\mathcal{P})$ and $\mathfrak{g}_{A}(\mathcal{P})$ is given by the number of relations in $\mathcal{P}$.  In Theorem~\ref{thm:breadthncov}, we consider three families of nilpotent Lie poset algebras (corresponding to chains, trees, and grids) to find that the breadth of a member algebra is simply the number of non-covering relations in the algebra's defining poset. In Theorems 3--6, we examine a three-parameter family of nilpotent Lie poset algebras, $\mathfrak{g}^{\prec}(\mathcal{P}(r_0,r_1,r_2))$, whose underlying posets have Hasse diagrams (see Section~\ref{sec:prelim}) which can be described as ``expanded double-fan'' graphs (see Figure~\ref{fig:Pnrm}). In contrast to previous examples, we show  that the breadth of such a nilpotent Lie poset algebra is generally not a function of the number and type of poset relations, but rather is an elementary function of its parameters. Motivated by this discovery, we end Section~\ref{sec:results} with a combinatorial obstruction to $\mathfrak{g}^{\prec}(\mathcal{P})$ having breadth equal to the number of non-covering relations in $\mathcal{P}$ (see Theorem~\ref{thm:subpo}). Finally, in the epilogue, we discuss directions for further research along with some consequences of the aforementioned obstruction result.

\begin{figure}[H]
$$\begin{tikzpicture}[scale=1.2]
\def\Node{\node [circle, fill, inner sep=1.5pt]}
\Node (1) at (0,0){};
\Node (2) at (1,0){};
\Node (3) at (2,0){};
\Node (4) at (3,0){};
\Node (5) at (1,1){};
\Node (6) at (2,1){};
\Node (7) at (0.5,2){};
\Node (8) at (1.5,2){};
\Node (9) at (2.5,2){};

\draw (1)--(5)--(7);
\draw (1)--(6)--(7);
\draw (2)--(5)--(8);
\draw (2)--(6)--(8);
\draw (3)--(5);
\draw (3)--(6);
\draw (4)--(5);
\draw (4)--(6);
\draw (5)--(9);
\draw (6)--(9);
\end{tikzpicture}$$
    \caption{Hasse diagram of the poset $\mathcal{P}(4,2,3)$}
    \label{fig:Pnrm}
\end{figure}

\section{Preliminaries}\label{sec:prelim}

A \textit{finite poset} $(\mathcal{P}, \preceq_{\mathcal{P}})$ consists of a finite set $\mathcal{P}=\{1,\hdots,n\}$ together with a binary relation $\preceq_{\mathcal{P}}$ on $\mathcal{P}$ which is reflexive, anti-symmetric, and transitive. We tacitly assume that if $x\preceq_{\mathcal{P}}y$ for $x,y\in\mathcal{P}$, then $x\le y$, where $\le$ denotes the natural ordering on $\mathbb{Z}$. When no confusion will arise, we simply denote a poset $(\mathcal{P}, \preceq_{\mathcal{P}})$ by $\mathcal{P}$, and $\preceq_{\mathcal{P}}$ by $\preceq$. 

Let $x,y\in\mathcal{P}$. If $x\preceq y$ and $x\neq y$, then we call $x\preceq y$ a \textit{strict relation} and write $x\prec y$. Let $Rel(\mathcal{P})$ denote the set of strict relations between elements of $\mathcal{P}$, $Ext(\mathcal{P})$ denote the set of minimal and maximal elements of $\mathcal{P}$, and $Rel_E(\mathcal{P})$ denote the set of strict relations between the elements of $Ext(\mathcal{P})$.

\begin{Ex}\label{ex:stargate}
Consider the poset $\mathcal{P}=\{1,2,3,4\}$ with $1\prec2\prec3,4$.  We then have $$Rel(\mathcal{P})=\{1\prec 2,1\prec 3,1\prec 4,2\prec 3,2\prec 4\},$$  $$Ext(\mathcal{P})=\{1,3,4\},\quad \text{and}\quad Rel_E(\mathcal{P})=\{1\prec 3, 1\prec 4\}.$$
\end{Ex}

\noindent
For $x,y\in\mathcal{P}$ satisfying $x\prec y$, we set $$[x,y]_{\mathcal{P}}=\{p\in\mathcal{P}~|~x\preceq p\preceq y\}.$$ Recall that, if $x\prec y$ and there exists no $z\in \mathcal{P}$ satisfying $x\prec z\prec y$, then $y$ \textit{covers} $x$ and $x\prec y$ is a \textit{covering relation}.  Using this language, the \textit{Hasse diagram} of a poset $\mathcal{P}$ can be reckoned as the graph whose vertices correspond to elements of $\mathcal{P}$ and whose edges correspond to covering relations.

\begin{Ex}
Let $\mathcal{P}$ be the poset of Example~\ref{ex:stargate}. The Hasse diagram of $\mathcal{P}$ is given in Figure~\ref{fig:Hasse} below.
\begin{figure}[H]
$$\begin{tikzpicture}
	\node (1) at (0, 0) [circle, draw = black, fill = black, inner sep = 0.5mm, label=left:{1}]{};
	\node (2) at (0, 1)[circle, draw = black, fill = black, inner sep = 0.5mm, label=left:{2}] {};
	\node (3) at (-0.5, 2) [circle, draw = black, fill = black, inner sep = 0.5mm, label=left:{3}] {};
	\node (4) at (0.5, 2) [circle, draw = black, fill = black, inner sep = 0.5mm, label=right:{4}] {};
    \draw (1)--(2);
    \draw (2)--(3);
    \draw (2)--(4);
    \addvmargin{1mm}
\end{tikzpicture}$$
\caption{Hasse diagram of $\mathcal{P}$, for $\mathcal{P}=\{1,2,3,4\}$ with $1\prec 2\prec 3,4$}\label{fig:Hasse}
\end{figure}
\end{Ex}

\noindent
Ongoing, the collection of non-covering relations of a poset $\mathcal{P}$ will prove important. So, let $$Rel_{\overline{C}}(\mathcal{P})=\{p\prec q~|~p\prec q\text{ is not a covering relation of }\mathcal{P}\}$$ and for $p\in\mathcal{P}$, let $$Rel_{\overline{C}}(\mathcal{P},p)=\{q\prec r\in Rel_{\overline{C}}(\mathcal{P})~|~q=p\text{ or }r=p\}.$$ Given a subset $S\subset\mathcal{P}$, the \textit{induced subposet generated by $S$} is the poset $\mathcal{P}_S$ on $S$, where $i\prec_{\mathcal{P}_S}j$ if and only if $i\prec_{\mathcal{P}}j$.

The following families of posets will be of interest in the sections that follow.

\begin{definition}
Let $C_n$ denote the chain on $n$ elements; that is, the poset on the set $\{1,\hdots,n\}$, where  $$1\prec 2\prec\hdots\prec n.$$
\end{definition}

\begin{remark}
For the families of posets defined in Definitions~\ref{def:grid},~\ref{def:tree}, and~\ref{def:comppos}, it is possible to give definitions in which the underlying set of the posets consist of integers $\{1,\hdots,n\}$, but it is more convenient to use an alternative labeling of the elements.
\end{remark}

\begin{definition}\label{def:grid}
Let $\mathbf{m\times n}$ denote the poset on the set $\{1_1,\hdots,n_1,\hdots,1_m,\hdots,n_m\}$ where $i_j\prec i_{j+1},(i+1)_j$, for $1\leq i\leq n-1$ and $1\leq j\leq m-1$, $n_j\prec n_{j+1}$ , for $1\le j\le m-1$, and $i_m\prec (i+1)_m$, for $1\le i\le n-1$.
\end{definition}

\begin{figure}[H]
$$\begin{tikzpicture}
	\node (1) at (0, 0) [circle, draw = black, fill = black, inner sep = 0.5mm, label=below:{$1_1$}]{};
	\node (2) at (-0.5, 0.5)[circle, draw = black, fill = black, inner sep = 0.5mm, label=left:{$1_2$}] {};
	\node (3) at (0.5, 0.5) [circle, draw = black, fill = black, inner sep = 0.5mm, label=right:{$2_1$}] {};
	\node (4) at (0, 1) [circle, draw = black, fill = black, inner sep = 0.5mm, label=left:{$2_2$}] {};
	\node (5) at (1, 1) [circle, draw = black, fill = black, inner sep = 0.5mm, label=right:{$3_1$}] {};
	\node (6) at (0.5, 1.5) [circle, draw = black, fill = black, inner sep = 0.5mm, label=left:{$3_2$}] {};
	\node (7) at (1.5, 1.5) [circle, draw = black, fill = black, inner sep = 0.5mm, label=right:{$4_1$}] {};
	\node (8) at (1, 2) [circle, draw = black, fill = black, inner sep = 0.5mm, label=above:{$4_2$}] {};
    \draw (1)--(2)--(4);
    \draw (1)--(3)--(4);
    \draw (4)--(6)--(8);
    \draw (3)--(5)--(6);
    \draw (5)--(7)--(8);
\end{tikzpicture}$$
\caption{Hasse diagram of $\mathbf{2\times 4}$}
\end{figure}

\begin{definition}\label{def:tree}
For $m>1$, let $T_m(n)$ denote the poset on $\{1_1,1_2,2_2,\hdots,m_2,\hdots,1_n,\hdots,m^{n-1}_n\}$ where \\* $i_k\prec (mi-j)_{k+1}$, for $1\le k<n$, $1\le i\le m^{k-1}$, and  $0\le j<m$.
\end{definition}

\begin{figure}[H]
$$\begin{tikzpicture}
	\node (1) at (0, 0) {};
	\node (2) at (0, 0.25)[circle, draw = black, fill = black, inner sep = 0.5mm, label=below:{$1_1$}] {};
	\node (3) at (-0.75, 1.25)[circle, draw = black, fill = black, inner sep = 0.5mm, label=left:{$1_2$}] {};
	\node (4) at (0.75, 1.25)[circle, draw = black, fill = black, inner sep = 0.5mm, label=right:{$2_2$}] {};
	\node (5) at (-1.25, 2)[circle, draw = black, fill = black, inner sep = 0.5mm, label=above:{$1_3$}] {};
	\node (6) at (-0.25, 2)[circle, draw = black, fill = black, inner sep = 0.5mm, label=above:{$2_3$}] {};
	\node (7) at (0.25, 2)[circle, draw = black, fill = black, inner sep = 0.5mm, label=above:{$3_3$}] {};
	\node (8) at (1.25, 2)[circle, draw = black, fill = black, inner sep = 0.5mm, label=above:{$4_3$}] {};
	\node (9) at (1, 2) {};
	\draw (3)--(2)--(4);
	\draw (5)--(3)--(6);
	\draw (8)--(4)--(7);
\end{tikzpicture}$$
\caption{Hasse diagram of $T_2(3)$}
\end{figure}

\begin{definition}\label{def:comppos}
Let $\mathcal{P}(r_0, r_1, r_2)$ denote the poset on $\{b_1,\hdots,b_{r_0},m_1,\hdots,m_{r_1},t_1,\hdots,t_{r_2}\}$ with $$b_1,\hdots,b_{r_0}\prec m_1,\hdots,m_{r_1}\prec t_1,\hdots, t_{r_2}.$$
\end{definition}

\begin{Ex}
Using the notation of Definition~\ref{def:comppos}, the poset of Example~\ref{ex:stargate} is $\mathcal{P}(1,1,2)$.
\end{Ex}

Let $\mathcal{P}$ be a finite poset. The (associative) \textit{incidence algebra} $A(\mathcal{P})=A(\mathcal{P}, \textbf{k})$ is the span over $\textbf{k}$ of elements $e_{i,j}$, for $i,j\in\mathcal{P}$ satisfying $i\preceq j$, with product given by setting $e_{i,j}e_{k,l}=e_{i,l}$ if $j=k$ and $0$ otherwise. The \textit{trace} of an element $\sum c_{i,j}e_{i,j}$ is $\sum c_{i,i}.$

We can equip $A(\mathcal{P})$ with the commutator product $[a,b]=ab-ba$, where juxtaposition denotes the product in $A(\mathcal{P})$, to produce the \textit{Lie poset algebra} $\mathfrak{g}(\mathcal{P})=\mathfrak{g}(\mathcal{P}, \textbf{k})$. If $|\mathcal{P}|=n$, then both $A(\mathcal{P})$ and $\mathfrak{g}(\mathcal{P})$ may be regarded as subalgebras of the algebra of $n \times n$ upper-triangular matrices over $\textbf{k}$. Such a matrix representation is realized by replacing each basis element $e_{i,j}$ by the $n\times n$ matrix $E_{i,j}$ containing a 1 in the $i,j$-entry and 0's elsewhere. The (associative) product between elements $e_{i,j}$ is then replaced by matrix multiplication between the $E_{i,j}$.

\begin{Ex}\label{ex:posetmat}
Let $\mathcal{P}$ be the poset of Example~\ref{ex:stargate}. The matrix form of elements in $\mathfrak{g}(\mathcal{P})$ is illustrated in Figure~\ref{fig:tA}, where the $*$'s denote potentially non-zero entries from \textup{\textbf{k}}.
\begin{figure}[H]
$$\kbordermatrix{
    & 1 & 2 & 3 & 4  \\
   1 & * & * & * & *   \\
   2 & 0 & * & * & *  \\
   3 & 0 & 0 & * & 0  \\
   4 & 0 & 0 & 0 & *  \\
  }$$
\caption{Matrix form defining $\mathfrak{g}(\mathcal{P})$, for $\mathcal{P}=\{1,2,3,4\}$ with $1\prec 2\prec 3,4$}\label{fig:tA}
\end{figure}
\end{Ex}

\noindent
Restricting $\mathfrak{g}(\mathcal{P})$ to trace-zero matrices yields a subalgebra of the first classical family $A_{n-1}=\mathfrak{sl}(n)$ which we refer to as a \textit{type-A Lie poset algebra} and denote by $\mathfrak{g}_A(\mathcal{P})$. Restricting $\mathfrak{g}(\mathcal{P})$ to strictly upper-triangular matrices yields a subalgebra which we refer to as a \textit{nilpotent Lie poset algebra} and denote by $\mathfrak{g}^{\prec}(\mathcal{P})$. 

\section{Results}\label{sec:results}
We begin this section with the formal definition of the breadth of a Lie algebra.

\begin{definition}
The \text{breadth} of a Lie algebra $L$ is the invariant
$$
b(L)=\max_{x\in L}~\textup{rank}(ad_x).
$$
\end{definition}

The following result describes bounds on the breadth of a Lie algebra and will be useful in what follows -- the proofs can be found in \textbf{\cite{dissertation}}.

\begin{proposition}\label{prop:ub} If $L$ is a Lie algebra with center $Z(L)$ and derived algebra $[L,L]$, then
\begin{enumerate}[\textup(i\textup)] 
    \item $b(L)\le \dim([L,L])$, and
    \item $b(L)\le \dim(L/ Z(L))-1$.
\end{enumerate}
\end{proposition}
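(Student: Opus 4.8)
The plan is to prove both bounds by working with the adjoint map $ad_x$ for an arbitrary $x \in L$ and keeping track of how the image sits relative to the standard subspaces.

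For part (i), the key observation is that $ad_x$ factors through the quotient $L/Z(L)$: since $ad_x$ annihilates $x$ itself, and (more to the point) since the image of $ad_x$ is unchanged if we replace $x$ by $x + z$ for $z \in Z(L)$, the natural move is to restrict attention to $\overline{ad}_x : L/Z(L) \to L$ induced by $ad_x$ (this is well-defined precisely because $ad_x$ kills $Z(L)$). Thus $\mathrm{rank}(ad_x) = \mathrm{rank}(\overline{ad}_x) \le \dim(L/Z(L))$. To sharpen this to $\dim(L/Z(L)) - 1$, I would argue that the induced map $\overline{ad}_x$ is itself not injective: the residue class $\overline{x}$ lies in the kernel of $\overline{ad}_x$, and provided $x \notin Z(L)$ this class is nonzero, so the rank drops by at least one more. (If $x \in Z(L)$ then $ad_x = 0$ and the bound is trivial.) Taking the maximum over $x$ gives $b(L) \le \dim(L/Z(L)) - 1$.

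For part (ii), the point is simply that for every $x$, the image of $ad_x = [x,-]$ is contained in $[L,L]$ by definition of the derived algebra. Hence $\mathrm{rank}(ad_x) = \dim(\mathrm{im}(ad_x)) \le \dim([L,L])$ for all $x$, and taking the maximum yields $b(L) \le \dim([L,L])$.

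The only genuinely delicate point is the "$-1$" improvement in part (i) — one must be a little careful to note that $\overline{x} \neq 0$ in $L/Z(L)$ exactly when $x \notin Z(L)$, and to handle the degenerate case $x \in Z(L)$ separately (where $ad_x \equiv 0$ makes the bound vacuous as long as $\dim(L/Z(L)) \ge 1$, i.e. $L$ is not itself abelian — and if $L$ is abelian then $b(L) = 0$ and one interprets the statement accordingly). Everything else is a direct consequence of the definitions, so I do not expect any real obstacle beyond this bookkeeping.
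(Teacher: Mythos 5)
Your proof is correct and is the standard argument ($\ker(ad_x)$ contains $Z(L)$ together with $x$ itself, giving the drop of one in rank, and $\mathrm{im}(ad_x)\subseteq[L,L]$ for part (ii)); the paper itself does not prove this proposition but defers to the cited dissertation, where essentially the same reasoning appears. Your caveat about the degenerate case is also well taken: as literally stated, part (i) fails when $L$ is abelian (then $b(L)=0$ but $\dim(L/Z(L))-1=-1$), so the bound must be read with the implicit assumption that $L$ is nonabelian.
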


\subsection{(Type-A) Lie poset algebras}

To determine a combinatorial translation of Proposition~\ref{prop:ub}(\textit{i}) for (type-A) Lie poset algebras, we make use of the following Proposition.

\begin{proposition}\label{prop:derivlp}
If $\mathcal{P}$ is a poset and $L=\mathfrak{g}(\mathcal{P})$ or $\mathfrak{g}_A(\mathcal{P})$, then $\dim([L,L])=|Rel(\mathcal{P})|$.
\end{proposition}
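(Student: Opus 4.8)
The plan is to compute $[L,L]$ directly in terms of the matrix basis $\{E_{i,j} : i \preceq j\}$ and show that its span is exactly the set of off-diagonal basis elements, i.e.\ $\operatorname{span}\{E_{i,j} : i \prec j\}$, which has dimension $|Rel(\mathcal{P})|$. First I would record the bracket relations among basis elements: for $i \preceq j$ and $k \preceq l$, one has $[E_{i,j}, E_{k,l}] = \delta_{j,k} E_{i,l} - \delta_{l,i} E_{k,j}$, where the resulting terms lie in $\mathfrak{g}(\mathcal{P})$ because $i \preceq j \preceq l$ (resp.\ $k \preceq l \preceq j$) forces $i \preceq l$ (resp.\ $k \preceq j$) by transitivity. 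This shows $[L,L] \subseteq \operatorname{span}\{E_{i,j} : i \prec j\}$, since every bracket of basis elements is either zero, a difference of two strictly-upper-triangular $E$'s, or (in the case $i=j$, $k=l$) zero; diagonal contributions can only arise from $[E_{i,j}, E_{j,i}]$, which is impossible here as $i \prec j$ and $j \prec i$ cannot both hold.

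For the reverse inclusion, I would show each $E_{p,q}$ with $p \prec q$ lies in $[L,L]$. Pick any element $r$ with $p \preceq r \preceq q$ — for instance $r = p$ or $r = q$ themselves. Using $r = q$: then $[E_{p,q}, E_{q,q}] = E_{p,q} - 0 = E_{p,q}$ (as $q \ne p$), and both $E_{p,q}$ and the diagonal element $E_{q,q}$ lie in $\mathfrak{g}(\mathcal{P})$, so $E_{p,q} \in [\mathfrak{g}(\mathcal{P}), \mathfrak{g}(\mathcal{P})]$. This settles the case $L = \mathfrak{g}(\mathcal{P})$: combining both inclusions gives $[L,L] = \operatorname{span}\{E_{i,j} : i \prec j\}$, whose dimension is $|Rel(\mathcal{P})|$.

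The type-A case requires a small extra argument, since $\mathfrak{g}_A(\mathcal{P})$ no longer contains the individual diagonal elements $E_{q,q}$, only their trace-zero combinations. Here I would instead use the element $H = E_{p,p} - E_{q,q} \in \mathfrak{g}_A(\mathcal{P})$ (which is trace zero and respects the poset since $p \preceq p$, $q \preceq q$): a direct computation gives $[H, E_{p,q}] = (E_{p,p} - E_{q,q})E_{p,q} - E_{p,q}(E_{p,p} - E_{q,q}) = E_{p,q} - (-E_{p,q}) = 2E_{p,q}$, so again $E_{p,q} \in [\mathfrak{g}_A(\mathcal{P}), \mathfrak{g}_A(\mathcal{P})]$. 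The upper-bound inclusion $[L,L] \subseteq \operatorname{span}\{E_{i,j} : i \prec j\}$ holds verbatim (brackets of trace-zero matrices are trace-zero, but more simply the same basis-bracket computation applies), and since $\operatorname{span}\{E_{i,j} : i \prec j\} \subseteq \mathfrak{g}_A(\mathcal{P})$ already consists of trace-zero matrices, equality follows and $\dim([L,L]) = |Rel(\mathcal{P})|$.

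The only mild obstacle is bookkeeping: one must be careful that all intermediate matrices ($E_{i,l}$, $E_{k,j}$, the diagonal elements, and $H$) genuinely lie in $\mathfrak{g}(\mathcal{P})$ or $\mathfrak{g}_A(\mathcal{P})$, which is where transitivity of $\preceq$ and the triviality of $i=i$ relations get used; but no genuinely hard step arises, and the computation is routine once the basis bracket formula is written down.
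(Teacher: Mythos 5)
Your proposal is correct and follows essentially the same route as the paper: bound $[L,L]$ above by $\operatorname{span}\{E_{i,j} : i\prec j\}$ via the basis bracket formula, then realize each $E_{p,q}$ as a bracket with a diagonal (or trace-zero diagonal) element. The only cosmetic difference is your choice of $E_{q,q}$ (resp. $E_{p,p}-E_{q,q}$) where the paper uses $\tfrac{1}{2}(E_{p,p}-E_{q,q})$ uniformly for both cases.
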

\begin{proof}
We claim that $$[L,L]=\text{span}\{E_{p,q}~|~p\prec q\}.$$ To begin, note that if $L=\mathfrak{g}(\mathcal{P})$, then a basis for $L$ is given by $$\mathscr{B}=\{E_{p,q}~|~p\prec q\}\cup \{E_{p,p}~|~p\in\mathcal{P}\}.$$ On the other hand, if $L=\mathfrak{g}_A(\mathcal{P})$, then a basis for $L$ is given by 
$$\mathscr{B}=\{E_{p,q}~|~p\prec q\}\cup \{E_{1,1}-E_{p,p}~|~p\in\mathcal{P},1\neq p\}.$$ In either case, since $$[E_{p_1,q_1},E_{p_2,q_2}]=\delta_{q_1,p_2}E_{p_1,q_2}-\delta_{p_1,q_2}E_{p_2,q_1}\in [L,L],$$ $$[E_{p_1,p_1},E_{p_2,q_2}]=\delta_{p_1,p_2}E_{p_1,q_2}-\delta_{p_1,q_2}E_{p_2,p_1}\in [L,L],$$ and $$[E_{p_1,p_1}-E_{q_1,q_1},E_{p_2,q_2}]=\delta_{p_1,p_2}E_{p_1,q_2}-\delta_{p_1,q_2}E_{p_2,p_1}-\delta_{q_1,p_2}E_{q_1,q_2}+\delta_{q_1,q_2}E_{p_2,q_1}\in [L,L],$$ 
for $p_1,q_1,p_2,q_2\in\mathcal{P}$, and $[L,L]$ is spanned by $\{[b_1,b_2]~|~b_1,b_2\in \mathscr{B}\},$ it follows that $$\text{span}\{E_{p,q}~|~p\prec q\}\supseteq [L,L].$$ Now, if $p,q\in\mathcal{P}$ satisfy $p\prec q$, then $\frac{1}{2}(E_{p,p}-E_{q,q}), E_{p,q}\in L$ and $$\left[\frac{1}{2}(E_{p,p}-E_{q,q}), E_{p,q}\right]=E_{p,q}\in [L,L].$$ Thus, $$\text{span}\{E_{p,q}~|~p\prec q\}\subseteq [L,L]$$ and the claim is established. As $\dim(\text{span}\{E_{p,q}~|~p\prec q\})=|Rel(\mathcal{P})|$, the result follows.
\end{proof}

As a consequence of Proposition~\ref{prop:derivlp}, we obtain the following combinatorial translation of Proposition~\ref{prop:ub}(\textit{i}) for (type-A) Lie poset algebras.

\begin{proposition}\label{prop:ublpa1}
If $\mathcal{P}$ is a poset and $L=\mathfrak{g}(\mathcal{P})$ or $\mathfrak{g}_A(\mathcal{P})$, then $b(L)\le |Rel(\mathcal{P})|$.
\end{proposition}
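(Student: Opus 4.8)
The plan is to combine the two previously established facts: Proposition~\ref{prop:ub}(\textit{ii}), which gives $b(L) \le \dim([L,L])$ for any Lie algebra $L$, and Proposition~\ref{prop:derivlp}, which identifies $\dim([L,L])$ with $|Rel(\mathcal{P})|$ precisely when $L = \mathfrak{g}(\mathcal{P})$ or $L = \mathfrak{g}_A(\mathcal{P})$. Chaining these two inequalities/equalities immediately yields $b(L) \le |Rel(\mathcal{P})|$.

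Concretely, I would proceed as follows. First, invoke Proposition~\ref{prop:ub}(\textit{ii}) with the given $L$ to obtain $b(L) \le \dim([L,L])$. Next, since by hypothesis $L = \mathfrak{g}(\mathcal{P})$ or $L = \mathfrak{g}_A(\mathcal{P})$, apply Proposition~\ref{prop:derivlp} to conclude $\dim([L,L]) = |Rel(\mathcal{P})|$. Substituting gives $b(L) \le |Rel(\mathcal{P})|$, which is the desired bound.

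There is essentially no obstacle here: the statement is a direct corollary of the two preceding propositions, and the only thing to verify is that the hypothesis on $L$ matches the hypothesis of Proposition~\ref{prop:derivlp} (it does, verbatim). Note in passing that one might instead try to route through Proposition~\ref{prop:ub}(\textit{i}), but that would require controlling $\dim(L/Z(L))$ in terms of $|Rel(\mathcal{P})|$, which is strictly more work than needed; part (\textit{ii}) is the efficient path and avoids any discussion of the center.
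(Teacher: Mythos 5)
Your proof is correct and is exactly the paper's argument: the paper derives Proposition~\ref{prop:ublpa1} by chaining the general bound $b(L)\le\dim([L,L])$ with Proposition~\ref{prop:derivlp}'s identification $\dim([L,L])=|Rel(\mathcal{P})|$. (The paper's text nominally cites Proposition~\ref{prop:ub}(\textit{i}) here, but that is evidently a typo for part (\textit{ii}), which is the bound you correctly use.)
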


In fact, the bound of Proposition~\ref{prop:ublpa1} is exact.

\begin{theorem}\label{thm:first}
If $\mathcal{P}$ is a poset and $L=\mathfrak{g}(\mathcal{P})$ or $\mathfrak{g}_A(\mathcal{P})$, then $$b(L)=|Rel(\mathcal{P})|.$$
\end{theorem}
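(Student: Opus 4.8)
By Proposition~\ref{prop:ublpa1} we already have the upper bound $b(L)\le |Rel(\mathcal{P})|$, so the plan is to exhibit a single element $x\in L$ with $\operatorname{rank}(ad_x)=|Rel(\mathcal{P})|$. The natural candidate is a generic diagonal element $x=\sum_{p\in\mathcal{P}}c_p E_{p,p}$ (in the type-A case, an element of the Cartan subalgebra $\operatorname{span}\{E_{1,1}-E_{p,p}\}$), since diagonal elements act diagonally on the root vectors: a direct computation gives $[x,E_{p,q}]=(c_p-c_q)E_{p,q}$ for each strict relation $p\prec q$, and $[x,E_{p,p}]=0$. Thus $ad_x$ is already diagonalized in the basis $\mathscr{B}$ from the proof of Proposition~\ref{prop:derivlp}, and its rank equals the number of relations $p\prec q$ for which $c_p\neq c_q$.

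The key step is then simply to choose the scalars $c_p$ so that $c_p\ne c_q$ for \emph{every} strict relation $p\prec q$; e.g. take $c_p=p$ under the convention that $p\preceq q$ forces $p\le q$ in $\mathbb{Z}$, so that $p\prec q$ implies $c_p=p<q=c_q$. With this choice $ad_x$ restricted to $\operatorname{span}\{E_{p,q}\mid p\prec q\}$ is invertible, hence $\operatorname{rank}(ad_x)=|Rel(\mathcal{P})|$. Combined with the upper bound this yields $b(L)=|Rel(\mathcal{P})|$. The same $x$ works in the type-A case, since $\sum_p p\,E_{p,p}$ differs from a trace-zero Cartan element by a scalar multiple of the identity, which is central and does not affect $ad_x$.

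I do not anticipate a genuine obstacle here: the only thing to be careful about is that the chosen diagonal element actually lies in $L$ (it does, as the diagonal is always available in both $\mathfrak{g}(\mathcal{P})$ and, after subtracting a multiple of the identity, in $\mathfrak{g}_A(\mathcal{P})$) and that the eigenvalues $c_p-c_q$ are nonzero on \emph{all} of $Rel(\mathcal{P})$, not merely on covering relations — which is exactly why the integer-labeling convention is invoked. So the proof is: recall the upper bound from Proposition~\ref{prop:ublpa1}; take $x=\sum_p p\,E_{p,p}$ (minus a suitable multiple of the identity in the type-A case); observe $ad_x(E_{p,q})=(p-q)E_{p,q}\ne 0$ for $p\prec q$ and $ad_x$ kills the diagonal part of $\mathscr{B}$; conclude $\operatorname{rank}(ad_x)=|Rel(\mathcal{P})|$, matching the bound.
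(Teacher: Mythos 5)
Your proposal is correct and takes essentially the same route as the paper: both establish the lower bound by exhibiting a diagonal (semisimple) element whose adjoint action has a nonzero eigenvalue on every root vector $E_{p,q}$ with $p\prec q$, using the convention that $p\prec q$ forces $p<q$ in $\mathbb{Z}$. The paper's element $x=\sum_{1\neq i\in\mathcal{P}} i(E_{1,1}-E_{i,i})$ is just a particular trace-zero choice that lies in both $\mathfrak{g}(\mathcal{P})$ and $\mathfrak{g}_A(\mathcal{P})$ at once, whereas you take $\sum_p p\,E_{p,p}$ and correct by a multiple of the identity for the type-A case; the two are interchangeable.
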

\begin{proof}
Considering Proposition~\ref{prop:ublpa1}, it suffices to show that 
\begin{equation}\label{eqn:thmfirst}
    |Rel(\mathcal{P})|\le b(L).
\end{equation}
To establish (\ref{eqn:thmfirst}), we construct $x\in L$ satisfying $E_{p,q}\in \text{im}(ad_x)$, for all $p,q\in\mathcal{P}$ such that $p\prec q$. Consider $$x=\sum_{1\neq i\in\mathcal{P}}i(E_{1,1}-E_{i,i})\in L.$$ Note that $$[x,E_{1,p}]=\left(p+\sum_{1\neq i\in\mathcal{P}}i\right)E_{1,p}\neq 0,$$ for all $p\neq 1$ satisfying $1\prec p$, and $$[x,E_{p,q}]=(q-p)E_{p,q}\neq 0,$$ for all $p,q\in\mathcal{P}$ satisfying $p\neq 1$ and $p\prec q$. Thus, $$\text{im}(ad_x)\supset\{E_{p,q}~|~p\prec q\}$$ so that $$|Rel(\mathcal{P})|=\dim(\text{span}\{E_{p,q}~|~p\prec q\})\le \textup{rank}(ad_x)\le b(L).$$ The result follows.
\end{proof}

\begin{Ex} If $\mathcal{P}$ is the poset of Example~\ref{ex:stargate}, i.e., $\mathcal{P}=\{1,2,3,4\}$ with $1\prec 2\prec 3,4$, then $$b(\mathfrak{g}(\mathcal{P}))=b(\mathfrak{g}_A(\mathcal{P}))=|Rel(\mathcal{P})|=5.$$

\end{Ex}

\subsection{Nilpotent Lie poset algebras}

To determine a combinatorial translation of Proposition~\ref{prop:ub} for nilpotent Lie poset algebras we make use of the following Proposition.

\begin{proposition}\label{prop:derivnil}
If $\mathcal{P}$ is a poset and $L=\mathfrak{g}^{\prec}(\mathcal{P})$, then
\begin{enumerate}
    \item[\textup(i\textup)] $\dim([L,L])=|Rel_{\overline{C}}(\mathcal{P})|$, and
    \item[\textup(ii\textup)] $\dim(L/ Z(L))= |\{p\prec q~|~p\prec q\notin Rel_E(\mathcal{P})\}|$.
\end{enumerate}
\end{proposition}
\begin{proof}
(\textit{i}) We claim that $$[L,L]=\text{span}\{E_{p,q}~|~p\prec q\in Rel_{\overline{C}}(\mathcal{P})\}.$$ If $p,q\in\mathcal{P}$ and $p\prec q$ is not a covering relation, then there exists $r\in\mathcal{P}$ such that $p\prec r\prec q$ and $$[E_{p,r},E_{r,q}]=E_{p,q}\in [L,L];$$ that is, $$\text{span}\{E_{p,q}~|~p\prec q\in Rel_{\overline{C}}(\mathcal{P})\}\subseteq [L,L].$$  Now, since $[L,L]$ is generated by the $[E_{p_1,q_1},E_{p_2,q_2}]$, for $p_1,p_2,q_1,q_2\in\mathcal{P}$ such that $p_1\prec q_1$ and $p_2\prec q_2$, and $[E_{p_1,q_1},E_{p_2,q_2}]=\delta_{q_1,p_2}E_{p_1,q_2}-\delta_{p_1,q_2}E_{p_2,q_1}$, it follows that elements $x\in [L,L]$ must be of the form $$x=\sum_{p\prec q\in Rel_{\overline{C}}(\mathcal{P})}a_{p,q}E_{p,q};$$ that is, $$[L,L]\subseteq\text{span}\{E_{p,q}~|~p\prec q\in Rel_{\overline{C}}(\mathcal{P})\}.$$ This establishes the claim. As $\dim(\text{span}\{E_{p,q}~|~p\prec q\in Rel_{\overline{C}}(\mathcal{P})\})=|Rel_{\overline{C}}(\mathcal{P})|$, the result follows.
\bigskip

(\textit{ii}) We claim that $$Z(L)=\text{span}\{E_{p,q}~|~p\prec q\in Rel_E(\mathcal{P})\}.$$ Evidently, $E_{p,q}\in Z(L)$, for $p\prec q\in Rel_E(\mathcal{P})$, so span$\{E_{p,q}~|~p\prec q\in Rel_E(\mathcal{P})\}\subseteq Z(L).$ Now, take $z=\sum_{i\prec j}z_{i,j}E_{i,j}\in Z(L)$. If $p,q\in\mathcal{P}$ satisfy $p\prec q$ with $q$ non-maximal, then there exists $r\in\mathcal{P}$ such that $q\prec r$ and $$[z,E_{q,r}]=\sum_{k\prec q}z_{k,q}E_{k,r}-\sum_{r\prec k}z_{r,k}E_{q,k}=0;$$ in particular, $z_{p,q}=0$. If $p,q\in\mathcal{P}$ satisfy $p\prec q$ with $p$ non-minimal, then there exists $r\in\mathcal{P}$ such that $r\prec p$ and $$[z,E_{r,p}]=\sum_{k\prec r}z_{k,r}E_{k,p}-\sum_{p\prec k}z_{p,k}E_{r,k}=0;$$ in particular, $z_{p,q}=0$. Therefore, $z_{p,q}=0$, for all $p,q\in \mathcal{P}$ such that $p\prec q$ and either $p$ is non-minimal or $q$ is non-maximal. This establishes the claim, and so we have $$L/ Z(L)=\text{span}\{E_{p,q}~|~p\prec q\}/\text{span}\{E_{p,q}~|~p\prec q\in Rel_E(\mathcal{P})\}$$ $$=\text{span}\{E_{p,q}~|~p\prec q\notin Rel_E(\mathcal{P})\}.$$ As $\dim(\text{span}\{E_{p,q}~|~p\prec q\notin Rel_E(\mathcal{P})\})=|\{E_{p,q}~|~p\prec q\notin Rel_E(\mathcal{P})\}|$, the result follows.
\end{proof}

As a consequence of Proposition~\ref{prop:derivnil}, we obtain the following combinatorial translation of Proposition~\ref{prop:ub} for nilpotent Lie poset algebras.

\begin{proposition}\label{prop:ubnil}
If $\mathcal{P}$ is a poset and $L=\mathfrak{g}^{\prec}(\mathcal{P})$, then
\begin{enumerate}
    \item[\textup(i\textup)] $b(L)\le |Rel_{\overline{C}}(\mathcal{P})|$, and
    \item[\textup(ii\textup)] $b(L)\le |\{p\prec q~|~p\prec q\notin Rel_E(\mathcal{P})\}|-1$.
\end{enumerate}
\end{proposition}

Using Proposition~\ref{prop:ubnil}, the following Theorem establishes exact breadth values for nilpotent Lie poset algebras corresponding to the posets $C_n,$ $\mathbf{2\times n},$ and $T_m(n).$ In each case, the breadth of the respective algebra is equal to the number of non-covering relations in the associated poset $\mathcal{P}$.

\begin{theorem}\label{thm:breadthncov}
\begin{enumerate}
    \item[\textup{(a)}] If $\mathcal{P}=C_n$ and $L=\mathfrak{g}^{\prec}(\mathcal{P})$, then $$b(L)=|Rel_{\overline{C}}(\mathcal{P})|=\frac{(n-1)(n-2)}{2}.$$
    \item[\textup{(b)}] If $\mathcal{P}=\mathbf{2\times n}$ and $L=\mathfrak{g}^{\prec}(\mathcal{P})$, then $$b(L)=|Rel_{\overline{C}}(\mathcal{P})|=\frac{(n-1)(3n-4)}{2}.$$
    \item[\textup{(c)}] If $\mathcal{P}=\mathcal{P}=T_m(n)$ and $L=\mathfrak{g}^{\prec}(\mathcal{P})$, then $$b(L)=|Rel_{\overline{C}}(\mathcal{P})|=\frac{(n-2)m^{n+1}+(1-n)m^n+m^2}{(m-1)^2}.$$
\end{enumerate}
\end{theorem}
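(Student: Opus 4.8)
The plan is to prove all three parts by the same two-pronged strategy: the upper bound $b(L)\le|Rel_{\overline{C}}(\mathcal{P})|$ is already supplied by Proposition~\ref{prop:ubnil}(\textit{ii}), so in each case it suffices to exhibit a single element $x\in L=\mathfrak{g}^{\prec}(\mathcal{P})$ whose image $\operatorname{im}(ad_x)$ has dimension $|Rel_{\overline{C}}(\mathcal{P})|$; then separately evaluate the combinatorial quantity $|Rel_{\overline{C}}(\mathcal{P})|$ to get the closed formula. For the witness element, the natural candidate is a sum of basis vectors along covering relations, $x=\sum_{p\prec q\text{ a covering relation}}c_{p,q}E_{p,q}$, with scalars $c_{p,q}$ chosen generically; the point is that for any non-covering relation $p\prec q$ there is a saturated chain $p=x_0\prec x_1\prec\cdots\prec x_k=q$ of covering relations, and $ad_x$ moves $E_{x_1,q}$ (or more precisely a suitable basis element) to a vector whose $E_{p,q}$-component is nonzero. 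One then argues, ordering the non-covering relations by the length of the connecting chain (or by $q-p$), that the images $ad_x(E_{\cdot,\cdot})$ are triangular with respect to this ordering and hence linearly independent, giving $\operatorname{rank}(ad_x)\ge|Rel_{\overline{C}}(\mathcal{P})|$.

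Concretely, I would proceed as follows. First, fix notation for the Hasse diagram of each of $C_n$, $\mathbf{2\times n}$, $T_m(n)$ and list the covering relations explicitly (for $C_n$: $i\prec i+1$; for $\mathbf{2\times n}$: the edges $i_j\prec i_{j+1}$, $i_j\prec(i+1)_j$, $n_j\prec n_{j+1}$, $i_m\prec(i+1)_m$; for $T_m(n)$: the tree edges $i_k\prec(mi-j)_{k+1}$). Second, define $x$ as a generic linear combination of the corresponding $E_{p,q}$'s. Third, for each non-covering relation $r: p\prec q$, choose a specific basis element $E_{a,b}$ (with $a\prec b$ a relation but not necessarily covering) such that $[x,E_{a,b}]$ has a nonzero $E_{p,q}$-coefficient and all its other nonzero coefficients lie on relations that are "shorter" than $r$ in a fixed linear extension of the partial order on $Rel_{\overline C}(\mathcal P)$ by chain-length; conclude linear independence of these brackets. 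For the chain $C_n$ this is transparent: $[x,E_{i,j}]=c_{j-1,j}E_{i,j+1}+\cdots$ shifts the right endpoint up, so one recovers all $E_{i,j}$ with $j-i\ge 2$, of which there are $\binom{n-1}{2}=\frac{(n-1)(n-2)}{2}$. For $\mathbf{2\times n}$ and $T_m(n)$ the same mechanism works but bookkeeping is heavier because there are two types of covering steps (right-ward and up-ward) and branching; the element $x$ must include both.

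For the combinatorial evaluations: $|Rel_{\overline C}(\mathcal P)| = |Rel(\mathcal P)| - |\{\text{covering relations}\}|$. For $C_n$, $|Rel(C_n)|=\binom n2$ and there are $n-1$ covering relations, giving $\binom n2-(n-1)=\frac{(n-1)(n-2)}2$. For $\mathbf{2\times n}$ one counts $|Rel|$ by summing, over all pairs of comparable grid points, and subtracts the $3n-2$ (or however many) covering edges; matching the claimed $\frac{(n-1)(3n-4)}2$ is a finite computation I would verify by induction on $n$ or by direct double-counting of comparable pairs in the two rows. For $T_m(n)$, level $k$ has $m^{k-1}$ vertices, each pair (ancestor at level $j$, descendant at level $k>j$) with the descendant lying in the ancestor's subtree contributes one relation; summing the geometric-type series and subtracting the $\sum_{k=1}^{n-1} m^{k-1}\cdot m = \frac{m(m^{n-1}-1)}{m-1}$ covering edges should yield $\frac{(n-2)m^{n+1}+(1-n)m^n+m^2}{(m-1)^2}$, again a routine (if lengthy) closed-form summation.

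The main obstacle I anticipate is not conceptual but organizational: for the grid $\mathbf{2\times n}$ and especially the branching tree $T_m(n)$, proving that the chosen brackets $[x,E_{a,b}]$ are linearly independent requires setting up the right partial order on $Rel_{\overline C}(\mathcal P)$ and checking that $ad_x$ is strictly triangular with nonzero diagonal with respect to it — one must make sure that the "lower-order terms" produced by $ad_x$ genuinely involve only non-covering relations that come earlier, and that the diagonal coefficients (which are products/sums of the generic scalars $c_{p,q}$) are nonzero, which is where genericity of the $c_{p,q}$ is used. A secondary nuisance is that $x$ itself, being a sum over covering relations only, automatically lies in $L$, but one must confirm each $E_{a,b}$ used as input also lies in $L$ (i.e. $a\prec b$ in $\mathcal P$), which is immediate but must be stated. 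Once the triangularity framework is in place, all three parts follow uniformly, and the closed-form formulas drop out of the summations.
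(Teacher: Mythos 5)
Your overall strategy is precisely the paper's: the upper bound is Proposition~\ref{prop:ubnil}(\textit{ii}), the witness is $x=\sum E_{p,q}$ summed over covering relations (the paper takes all coefficients equal to $1$ -- genericity is never needed -- and for $\mathbf{2\times n}$ it even omits the vertical covering edges $i_1\prec i_2$ from $x$, which keeps the brackets cleaner), and the closed forms come from evaluating $|Rel_{\overline{C}}(\mathcal{P})|$, which the paper does by direct summation for $C_n$ and by recursions for $\mathbf{2\times n}$ and $T_m(n)$; your count of covering relations to subtract is consistent with those formulas. The one step of your plan that fails as written is the triangularity ordering: ordering $Rel_{\overline{C}}(\mathcal{P})$ by chain length (or by $q-p$) cannot work, because $ad_x$ sends a matrix unit whose endpoints are $\ell$ steps apart to a combination of matrix units whose endpoints are all $\ell+1$ steps apart -- for $C_n$ one has $[x,E_{p,q-1}]=E_{p-1,q-1}-E_{p,q}$, so the ``error term'' $E_{p-1,q-1}$ has exactly the same chain length as the target $E_{p,q}$ and is not earlier in your proposed order; the same phenomenon occurs for the grid and the tree. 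The paper sidesteps all linear-independence bookkeeping by producing exact preimages: for $C_n$ and $\mathbf{2\times n}$ the telescoping input $-\sum_{i=0}^{p-1}E_{p-i,q-1-i}$ satisfies $[x,\cdot]=E_{p,q}$ on the nose, and for $T_m(n)$ it inducts on the level of the upper endpoint (not on chain length), using that the error terms appearing at step $d$ were already shown to lie in $\text{im}(ad_x)$ at step $d-1$. If you prefer to keep your triangularity framing, the repair is to refine the order within each chain length (for $C_n$, by the left endpoint), or equivalently to show that $ad_x$ maps the span of the length-$\ell$ matrix units onto the span of the length-$(\ell+1)$ ones; either fix is routine, but as stated your ordering does not certify linear independence.
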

\begin{proof}
We will prove part (a). The proofs of parts (b) and (c) are relegated to the Appendix.

First, we show that $b(L)=|Rel_{\overline{C}}(\mathcal{P})|$. Considering Proposition~\ref{prop:ubnil}(i), it suffices to show that 
\begin{equation}\label{eqn:second}
    |Rel_{\overline{C}}(\mathcal{P})|\le b(L).
\end{equation}
To establish (\ref{eqn:second}), we construct $x\in L$ satisfying $E_{p,q}\in \text{im}(ad_x)$, for all $p,q\in\mathcal{P}$ such that $p\prec q\in Rel_{\overline{C}}(\mathcal{P})$. Consider $$x=\sum_{i=1}^{n-1}E_{i,i+1}.$$ If $1\prec p$ is not a covering relation, then $1\prec p-1\prec p$ and $$[x,-E_{1,p-1}]=E_{1,p}\in\text{im}(ad_x).$$ If $1 \neq p\prec q$ is not a covering relation, then $p\prec q-1\prec q$ and $$\left[x, -\sum_{i=0}^{p-1}E_{p-i,q-1-i}\right]=\sum_{i=0}^{p-1}E_{p-i,q-i}-\sum_{j=0}^{p-2}E_{p-j-1,q-1-j}=\sum_{i=0}^{p-1}E_{p-i,q-i}-\sum_{j=1}^{p-1}E_{p-j,q-j}=E_{p,q}\in \text{im}(ad_x).$$ Thus, $$\text{im}(ad_x)\supset\{E_{p,q}~|~p\prec q\in Rel_{\overline{C}}(\mathcal{P})\}$$ so that $$|Rel_{\overline{C}}(\mathcal{P})|=\dim(\text{span}\{E_{p,q}~|~p\prec q\in Rel_{\overline{C}}(\mathcal{P})\})\le \textup{rank}(ad_x)\le b(L).$$ It follows that $b(L)=|Rel_{\overline{C}}(\mathcal{P})|$.

Now, we show that $$|Rel_{\overline{C}}(\mathcal{P})|=\frac{(n-1)(n-2)}{2}.$$ For $p\in\{1,\hdots,n-1\}$ there are $n-p-1$ elements $q\in\mathcal{P}$ such that $p\prec q\in Rel_{\overline{C}}(\mathcal{P})$. Thus, $$|Rel_{\overline{C}}(\mathcal{P})|=\sum_{p=1}^{n-1}n-p-1=\sum_{i=1}^{n-2}i=\frac{(n-1)(n-2)}{2}.$$
\end{proof}

\begin{remark}
It appears that Theorem~\ref{thm:breadthncov} \textup{(}b\textup{)} holds more generally; that is, if $\mathcal{P}=\mathbf{m\times n}$ with $m<4$ or $n<4$ and $L=\mathfrak{g}^{\prec}(\mathcal{P})$, then $b(L)=|Rel_{\overline{C}}(\mathcal{P})|$. We conjecture that this is so \textup(see Conjecture~\ref{conj:1}\textup).
\end{remark}

It is important to note that the breadth of $\mathfrak{g}^{\prec}(\mathcal{P})$ is not always given by $|Rel_{\overline{C}}(\mathcal{P})|$. For example, the smallest poset $\mathcal{P}$ one finds satisfying $b(\mathfrak{g}^{\prec}(\mathcal{P}))< |Rel_{\overline{C}}(\mathcal{P})|$ is $\mathcal{P}=\mathcal{P}(2,1,2)$. In what follows, we show that for the family of posets $\mathcal{P}(r_0,r_1,r_2)$ the breadth of the corresponding nilpotent Lie poset algebra can be given by either of the upper bounds established in Proposition~\ref{prop:ubnil} but can also be strictly less than both.

\begin{remark}
If $\mathcal{P}=\mathcal{P}(1,n,1)$, then $\mathfrak{g}^{\prec}(\mathcal{P})$ is a generalized Heisenberg Lie algebra.
\end{remark}

Recall from Definition~\ref{def:comppos} that $\mathcal{P}(r_0,r_1,r_2)=\{b_1,\hdots,b_{r_0},m_1,\hdots,m_{r_1},t_1,\hdots,t_{r_2}\}$ with $$b_1,\hdots,b_{r_0}\prec m_1,\hdots,m_{r_1}\prec t_1,\hdots, t_{r_2}.$$ Set $\mathfrak{g}(r_0,r_1,r_2)=\mathfrak{g}^{\prec}(\mathcal{P}(r_0,r_1,r_2))$. 

\begin{theorem}\label{thm:pnrme}
If $L=\mathfrak{g}(r_0,r_1,r_2)$ with $r_1\ge r_0$ or $r_1\ge r_2$, then $b(L)=r_0r_2$.
\end{theorem}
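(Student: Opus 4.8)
The plan is to prove the two inequalities $b(L) \le r_0 r_2$ and $b(L) \ge r_0 r_2$ separately, with the hypothesis $r_1 \ge r_0$ or $r_1 \ge r_2$ entering only in the upper bound. For notation, recall that a basis for $L = \mathfrak{g}^\prec(\mathcal{P}(r_0,r_1,r_2))$ consists of the matrix units $E_{b_i,m_j}$ (for $1\le i\le r_0$, $1\le j\le r_1$), $E_{m_j,t_k}$ (for $1\le j\le r_1$, $1\le k\le r_2$), and $E_{b_i,t_k}$ (for $1\le i\le r_0$, $1\le k\le r_2$); the last of these spans $[L,L]$ and also equals $Z(L)$, since $Rel_E(\mathcal{P}) = Rel_{\overline{C}}(\mathcal{P}) = \{b_i \prec t_k\}$. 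In particular Proposition~\ref{prop:ubnil}(ii) already gives $b(L) \le r_0 r_2$, so the upper bound is free. The work is entirely in producing an element $x \in L$ with $\mathrm{rank}(ad_x) = r_0 r_2$.

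**Constructing the witness element.** Write a general element of $L$ as $x = \sum_{i,j} \alpha_{i,j} E_{b_i,m_j} + \sum_{j,k} \beta_{j,k} E_{m_j,t_k} + (\text{central part})$, and drop the central part since it does not affect $ad_x$. Form the matrices $A = (\alpha_{i,j}) \in \mathbf{k}^{r_0 \times r_1}$ and $B = (\beta_{j,k}) \in \mathbf{k}^{r_1 \times r_2}$. A direct bracket computation shows that for such $x$, the image of $ad_x$ lands in $Z(L) = \mathrm{span}\{E_{b_i,t_k}\}$ together with whatever is hit from brackets of $x$ with the "bottom" and "top" basis vectors, and the key fact is that $ad_x$ restricted to $\mathrm{span}\{E_{b_i,m_j}\} \oplus \mathrm{span}\{E_{m_j,t_k}\}$ surjects onto the span of the entries of $A B^{\,\mathrm{T}}$-type products — more precisely, $[x, E_{m_j,t_k}] = \sum_i \alpha_{i,j} E_{b_i,t_k}$ and $[x, E_{b_i,m_j}] = -\sum_k \beta_{j,k} E_{b_i,t_k}$ (signs to be checked). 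Consequently $\mathrm{im}(ad_x) = \mathrm{span}\{E_{b_i,t_k} : \text{the } (i,k) \text{ entry is in the span}\}$, and one computes that $\mathrm{rank}(ad_x)$ equals the dimension of the subspace of $\mathbf{k}^{r_0 \times r_2}$ spanned by $\{(\text{row } i \text{ of } A)\} \otimes (\text{standard basis}) \cup (\text{standard basis}) \otimes \{(\text{column } k \text{ of } B)\}$ — i.e. the span of $\{e_i^{r_0} v^{\mathrm{T}} : v \in \mathrm{rowspace}(B)^{?}\} + \{\cdots\}$. The cleanest version: $\mathrm{rank}(ad_x) = r_0 \cdot \mathrm{rank}(B) + r_2 \cdot \mathrm{rank}(A) - \mathrm{rank}(A)\mathrm{rank}(B)$ when the row space of $A^{\mathrm T}$ and column space of $B$ are "generic"; but I should verify this formula carefully rather than trust it.

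**Choosing $A$ and $B$ to reach $r_0 r_2$.** The goal is to choose $A \in \mathbf{k}^{r_0 \times r_1}$ and $B \in \mathbf{k}^{r_1 \times r_2}$ so that the spanning set above fills all of $\mathbf{k}^{r_0\times r_2}$. If $r_1 \ge r_0$, take $A$ to have rank $r_0$ (possible since $r_1 \ge r_0$ gives enough columns); then the vectors $\{(\text{col } k \text{ of } B) \text{ placed appropriately}\}$ together with $r_0$ independent "row" contributions... actually the slick choice is: pick $A$ with full row rank $r_0$ and $B$ with full column rank $\min(r_1,r_2)$, but one must chase exactly which matrix units $E_{b_i,t_k}$ are obtained. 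I expect the right statement is that $\mathrm{im}(ad_x) \supseteq \{E_{b_i,t_k}\}$ as soon as for every pair $(i,k)$ there is a $j$ with $\alpha_{i,j} \ne 0$ and (separately) a $j'$ with $\beta_{j',k}\ne 0$, combined with a dimension count — because from $[x,E_{m_j,t_k}]$ we get the vector $(\alpha_{1,j},\dots,\alpha_{r_0,j})$ in the $t_k$-slot for each $j$, so varying $j$ and $k$ we span $\mathrm{colspace}(A) \otimes \mathbf{k}^{r_2}$, which is all of $\mathbf{k}^{r_0\times r_2}$ precisely when $\mathrm{rank}(A) = r_0$, i.e. when $r_1 \ge r_0$. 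Symmetrically, using $[x,E_{b_i,m_j}]$ we span $\mathbf{k}^{r_0}\otimes \mathrm{rowspace}(B)$, which is everything when $\mathrm{rank}(B) = r_2$, i.e. when $r_1 \ge r_2$. So under $r_1 \ge r_0$ alone, take $B = 0$ and $A$ of rank $r_0$; under $r_1 \ge r_2$ alone, take $A = 0$ and $B$ of rank $r_2$. Either way $\mathrm{im}(ad_x) = Z(L)$, giving $\mathrm{rank}(ad_x) = r_0 r_2$, hence $b(L) \ge r_0 r_2$, and with the upper bound from Proposition~\ref{prop:ubnil}(ii) we get equality.

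**Main obstacle.** The only genuinely delicate point is the exact bracket bookkeeping: confirming that $ad_x$ sends $E_{m_j,t_k}$ to $\sum_i \alpha_{i,j}E_{b_i,t_k}$ (and the analogous formula for $E_{b_i,m_j}$) with no extra terms, and that brackets of $x$ with the central vectors $E_{b_i,t_k}$ vanish — this is what pins $\mathrm{im}(ad_x)$ inside $Z(L)$ and makes the rank computation a linear-algebra fact about the column space of $A$ (or row space of $B$). Once that is in hand, the surjectivity onto $\mathbf{k}^{r_0 \times r_2}$ is immediate from the full-rank choice, so I would organize the write-up as: (1) identify $Z(L)$ and $[L,L]$ and invoke Proposition~\ref{prop:ubnil}(ii) for "$\le$"; (2) the bracket lemma; (3) the rank-versus-column-space identification; (4) the explicit choice of $x$ in each of the two cases.
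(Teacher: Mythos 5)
Your proposal is correct and follows essentially the same route as the paper: the upper bound comes from $\dim([L,L])=|Rel_{\overline{C}}(\mathcal{P})|=r_0r_2$ via Proposition~\ref{prop:ubnil}(ii), and the lower bound from a witness $x$ supported on the $E_{b_i,m_j}$ (resp.\ $E_{m_j,t_k}$) whose coefficient matrix has full rank $r_0$ (resp.\ $r_2$). The paper's explicit choice $x=\sum_{i=1}^{r_0}E_{b_i,m_i}$ in the case $r_1\ge r_0$ is exactly your ``$A$ of rank $r_0$, $B=0$'' specialized to $A=[\,I_{r_0}\mid 0\,]$, and your bracket formulas $[x,E_{m_j,t_k}]=\sum_i\alpha_{i,j}E_{b_i,t_k}$ and $[x,E_{b_i,m_j}]=-\sum_k\beta_{j,k}E_{b_i,t_k}$ check out.
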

\begin{proof}
Let $\mathcal{P}=\mathcal{P}(r_0,r_1,r_2)$. We assume that $r_1\ge r_0$, the other case following via a symmetric argument. Note that $$Rel_{\overline{C}}(\mathcal{P})=Rel_E(\mathcal{P})=\{b_i\prec t_j~|~1\le i\le r_0,1\le j\le r_2\}$$ from which it follows $$\dim([L,L])=|Rel_{\overline{C}}(\mathcal{P})|=r_0r_2.$$ Thus, by Proposition~\ref{prop:ubnil}(i), to establish the result it suffices to construct $x\in L$ such that $E_{p,q}\in \text{im}(ad_x)$, for all $p\prec q\in Rel_E(\mathcal{P})$. Consider $x=\sum_{i=1}^{r_0}E_{b_i,m_i}$. Since $$[x, E_{m_i, t_j}]=E_{b_i,t_j}\in\text{im}(ad_x),$$ for $1\le i\le r_0$ and $1\le j\le r_2$, it follows that the given $x\in L$ has the desired properties. Therefore, $$\text{im}(ad_x)\supset\{E_{p,q}~|~p\prec q\in Rel_E(\mathcal{P})\}$$ so that $$r_0r_2=|Rel_{\overline{C}}(\mathcal{P})|=\dim(\text{span}\{E_{p,q}~|~p\prec q\in Rel_E(\mathcal{P})\})\le \textup{rank}(ad_x)\le b(L).$$ The result follows.
\end{proof}

To determine a formula for $b(\mathfrak{g}(r_0,r_1,r_2))$ when $r_1<r_0,r_2$, we study the matrix of $ad_x$, denoted $M_x$, for a general element $x\in\mathfrak{g}(r_0,r_1,r_2)$. First, fix the basis $\mathscr{B}(r_0,r_1,r_2)$ of $\mathfrak{g}(r_0,r_1,r_2)$ given by $$\{E_{b_i,m_j}~|~1\le i\le r_0,~1\le j\le r_1\}\cup \{E_{m_i,t_j}~|~1\le i\le r_1,~1\le j\le r_2\}\cup \{E_{b_i,t_j}~|~1\le i\le r_0,~1\le j\le r_2\}.$$ Then every element of  $\mathfrak{g}(r_0,r_1,r_2)$ can be written as $$x=\sum_{g=1}^{r_0}\sum_{h=1}^{r_1}a_{b_g,m_h}E_{b_g,m_h}+\sum_{i=1}^{r_1}\sum_{j=1}^{r_2}a_{m_i,t_j}E_{m_i,t_j}+\sum_{k=1}^{r_0}\sum_{l=1}^{r_2}a_{b_k,t_l}E_{b_k,t_l}.$$ To study $M_x$, it will be helpful to partition $\mathscr{B}(r_0,r_1,r_2)$ into three ordered subsets:
\begin{itemize}
    \item $B_1=\{E_{b_1,m_1},E_{b_2,m_1},\hdots,E_{b_{r_0},m_1},\hdots, E_{b_1,m_{r_1}},E_{b_2,m_{r_1}},\hdots,E_{b_{r_0},m_{r_1}}\}$
    \item $B_2=\{E_{m_1,t_1},E_{m_2,t_1},\hdots,E_{m_{r_1},t_1},\hdots, E_{m_1,t_{r_2}},E_{m_2,t_{r_2}},\hdots,E_{m_{r_1},t_{r_2}}\}$ 
    \item $B_3=\{E_{b_1,t_1},E_{b_2,t_1},\hdots,E_{b_{r_0},t_1},\hdots, E_{b_1,t_{r_2}},E_{b_2,t_{r_2}},\hdots,E_{b_{r_0},t_{r_2}}\}.$
\end{itemize}
Ordering the columns of $M_x$ as $B_1,B_2,B_3$ and the rows as $B_3,B_2,B_1$, the matrix has the following form:

\begin{figure}[H]
$$\kbordermatrix{
    &  &  &  &   \\
    & -a_{m_1,t_1}I_{r_0} & -a_{m_2,t_1}I_{r_0} & \hdots & -a_{m_{r_1},t_1}I_{r_0} & A & 0  & \hdots & 0 & 0   \\
    & -a_{m_1,t_2}I_{r_0} & -a_{m_2,t_2}I_{r_0} & \hdots & -a_{m_{r_1},t_2}I_{r_0} & 0 & A  & \hdots & 0 & 0  \\
    & \vdots & \vdots & \vdots & \vdots & \vdots & \vdots & \ddots & \vdots & \vdots  \\
    & -a_{m_1,t_{r_2}}I_{r_0} & -a_{m_2,t_{r_2}}I_{r_0} & \hdots & -a_{m_{r_1},t_{r_2}}I_{r_0} & 0 & 0 &  \hdots & A & 0  \\
    & 0 & 0 & \hdots & 0 & 0 & 0 &  \hdots & 0 & 0  \\
  },$$
\caption{$M_x$}
\end{figure}

\noindent
where $I_{r_0}$ is the $r_0\times r_0$ identity matrix and

\begin{figure}[H]
$$A=\kbordermatrix{
    &  &  &  &   \\
    & a_{b_1,m_1} & a_{b_1,m_2} & \hdots & a_{b_1,m_{r_1}}  \\
    & a_{b_2,m_1} & a_{b_2,m_2} & \hdots & a_{b_2,m_{r_1}}  \\
    & \vdots & \vdots & \vdots & \vdots   \\
    & a_{b_{r_0},m_1} & a_{b_{r_0},m_2} & \hdots & a_{b_{r_0},m_{r_1}}  \\
  }.$$
\caption{Matrix form of $A$}
\end{figure}

Focusing on the section of the matrix $M_x$ corresponding to columns $B_1$, illustrated below, 

\begin{figure}[H]
$$\kbordermatrix{
    &  &  &  &   \\
    & -a_{m_1,t_1}I_{r_0} & -a_{m_2,t_1}I_{r_0} & \hdots & -a_{m_{r_1},t_1}I_{r_0}   \\
    & -a_{m_1,t_2}I_{r_0} & -a_{m_2,t_2}I_{r_0} & \hdots & -a_{m_{r_1},t_2}I_{r_0}  \\
    & \vdots & \vdots & \vdots & \vdots  \\
    & -a_{m_1,t_{r_2}}I_{r_0} & -a_{m_2,t_{r_2}}I_{r_0} & \hdots & -a_{m_{r_1},t_{r_2}}I_{r_0}   \\
    & 0 & 0 & \hdots & 0   \\
  }$$
\caption{Restriction of $M_x$ to columns $B_1$}
\end{figure}

\noindent
it is clear that the matrix $M_x$ can be row reduced in such a way that there are $sr_0$ rows with a nonzero entry in a unique column in $B_1$, for $0\le s\le r_1$, and the remaining rows contain zeros in the columns of $B_1$. Further, such a row reduction can be accomplished by performing block row operations, where the row labeled by $E_{b_i,t_j}$ is multiplied by a constant $c$ and added to the row labeled by $E_{b_i,t_k}$, for some $1\le j\neq k\le r_2$ and for all $1\le i\le r_0$.

Having performed the described block row operations, the remaining $(r_2-s)$ blocks of nonzero rows of $M_x$ labeled by elements of the form $E_{b_i,t_j}$, for $1\le i\le r_0$ and fixed $j$, with entries of 0 in the columns of $B_1$ must be of the form 

\begin{figure}[H]
$$\kbordermatrix{
    &   &  &  &  &  &  &  &  \\
    & 0  & \hdots & 0 & c_1 A & c_2A & \hdots & c_{r_2} A & 0  \\
  },$$
\caption{Remaining nonzero rows of $M_x$}
\end{figure}

\noindent
where $c_l\in \textup{\textbf{k}}$, for $1\le l\le r_2$. Such collections of rows can consist of at most $\text{rank}(A)\le r_1$ linearly independent rows. Therefore, we have that the rank of $M_x$ is bounded above by $$sr_0+r_1(r_2-s)=s(r_0-r_1)+r_1r_2,$$ for $0\le s\le r_1$; that is, we are led to the following Theorem.

\begin{theorem}\label{thm:pnrmub}
If $L=\mathfrak{g}(r_0,r_1,r_2)$ with $r_1<r_0,r_2$, then $$b(L)\le r_1(r_0-r_1)+r_1r_2=r_1(r_0+r_2-r_1).$$
\end{theorem}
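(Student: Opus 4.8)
The plan is to carry out in full the matrix analysis sketched above. First I would fix an arbitrary $x\in L$, written in the basis $\mathscr{B}(r_0,r_1,r_2)$ with coefficients $a_{b_g,m_h}$, $a_{m_i,t_j}$, $a_{b_k,t_l}$, and compute the three families of brackets $[x,E_{b_g,m_h}]$, $[x,E_{m_i,t_j}]$, and $[x,E_{b_k,t_l}]$ directly from $[E_{p_1,q_1},E_{p_2,q_2}]=\delta_{q_1,p_2}E_{p_1,q_2}-\delta_{p_1,q_2}E_{p_2,q_1}$. One finds $[x,E_{b_k,t_l}]=0$, while $[x,E_{b_g,m_h}]=-\sum_j a_{m_h,t_j}E_{b_g,t_j}$ and $[x,E_{m_i,t_j}]=\sum_g a_{b_g,m_i}E_{b_g,t_j}$ both lie in $\text{span}(B_3)$; hence $\text{im}(ad_x)\subseteq\text{span}(B_3)$, and $\textup{rank}(ad_x)$ equals the rank of the $r_0r_2\times(r_0r_1+r_1r_2)$ matrix with rows indexed by $B_3$ and columns by $B_1$ and $B_2$, having exactly the block form displayed for $M_x$. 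The crucial observation is that this matrix is a concatenation of two Kronecker products: with $V$ the $r_2\times r_1$ matrix $V_{j,h}=a_{m_h,t_j}$ and $A$ as in the statement, the $B_1$-columns give $-V\otimes I_{r_0}$ and the $B_2$-columns give $I_{r_2}\otimes A$ (block-diagonal with $r_2$ copies of $A$).

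Next I would bound the rank of this matrix. Set $s=\textup{rank}(V)$, so $s\le\min(r_1,r_2)=r_1$. Choose an invertible $r_2\times r_2$ matrix $P$ with $PV$ having its last $r_2-s$ rows zero; applying $P$ to the block-rows of $M_x$ replaces the $B_1$-part by $-(PV)\otimes I_{r_0}$ and the $B_2$-part by $P\otimes A$. The top $s$ block-rows, which carry the nonzero rows of $-(PV)\otimes I_{r_0}$, then have full row rank $sr_0$ using the $B_1$-columns alone, while the bottom $r_2-s$ block-rows have zero $B_1$-part and $B_2$-part equal to $C\otimes A$, where $C$ is the bottom $(r_2-s)\times r_2$ block of $P$ and hence has full row rank $r_2-s$. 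Since any nonzero combination of the top block-rows is nonzero in the $B_1$-columns whereas every combination of the bottom block-rows vanishes there, the two groups contribute independently, so
$$\textup{rank}(ad_x)=sr_0+\textup{rank}(C\otimes A)=sr_0+(r_2-s)\,\textup{rank}(A)\le sr_0+(r_2-s)r_1,$$
using $\textup{rank}(A)\le\min(r_0,r_1)=r_1$. Because $r_0>r_1$, the quantity $s(r_0-r_1)+r_1r_2$ is increasing in $s\in\{0,1,\dots,r_1\}$, so it is at most $r_1(r_0-r_1)+r_1r_2=r_1(r_0+r_2-r_1)$; taking the maximum over $x$ gives $b(L)\le r_1(r_0+r_2-r_1)$. (Only the inequality $\textup{rank}(ad_x)\le sr_0+(r_2-s)r_1$ is needed, so the independence step may be replaced by the crude estimate $\textup{rank}(ad_x)\le sr_0+\textup{rank}(C\otimes A)$.)

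The one point requiring care is purely organizational: one must confirm the block form of $M_x$ from the bracket computations and, more importantly, verify that the block row operations used to echelonize $V$ introduce no coupling between the $B_1$- and $B_2$-columns that would spoil the split into independent top and bottom parts. This is exactly what the Kronecker description buys us, since left multiplication by $P$ on block-rows acts compatibly on both $V\otimes I_{r_0}$ and $I_{r_2}\otimes A$. It is also worth stressing that $s$ and $\textup{rank}(A)$ both depend on $x$, but since the final bound is obtained by replacing each with its $x$-free maximum $r_1$ and then maximizing over $s$, the estimate is uniform; together with Theorem~\ref{thm:pnrme} (which covers the complementary regime $r_1\ge r_0$ or $r_1\ge r_2$) this yields upper bounds for $b(\mathfrak{g}(r_0,r_1,r_2))$ in all parameter ranges.
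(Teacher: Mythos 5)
Your proposal is correct and follows essentially the same route as the paper: the paper's argument for this bound is precisely the block analysis of $M_x$ with rows $B_3$ and columns $B_1,B_2$, a row reduction producing $sr_0$ rows pinned down by the $B_1$-columns and $r_2-s$ residual blocks of the form $[c_1A,\dots,c_{r_2}A]$ of rank at most $\mathrm{rank}(A)\le r_1$, followed by maximizing $s(r_0-r_1)+r_1r_2$ over $0\le s\le r_1$. Your Kronecker-product formulation ($-V\otimes I_{r_0}$ and $I_{r_2}\otimes A$, with $P\otimes I_{r_0}$ implementing the block row operations) is a cleaner and more rigorous packaging of the paper's informal row-reduction discussion, but it is the same idea.
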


\noindent
In fact, the bound of Theorem~\ref{thm:pnrmub} is exact.

\begin{theorem}\label{thm:pnrmc}
If $L=\mathfrak{g}(r_0,r_1,r_2)$ with $r_1<r_0,r_2$, then $$b(L)=r_1(r_0+r_2-r_1).$$
\end{theorem}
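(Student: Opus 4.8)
Since Theorem~\ref{thm:pnrmub} already gives $b(L)\le r_1(r_0+r_2-r_1)$, the entire content is the reverse inequality: I must exhibit a single element $x\in L=\mathfrak{g}(r_0,r_1,r_2)$ for which $\operatorname{rank}(ad_x)\ge r_1(r_0+r_2-r_1)$. Guided by the block analysis preceding the statement, the target rank $r_1 r_2 + r_1(r_0-r_1)$ is achieved in the upper-bound argument precisely when (i) the matrix $A$ has full rank $r_1$ (so the ``$cA$'' rows contribute $r_1(r_2-s)$ independent rows), and (ii) the block row reduction using the columns $B_1$ can be arranged so that $s=r_1$ blocks of $B_3$-rows become pivots — equivalently, the $r_2\times r_1$ matrix of coefficients $(a_{m_h,t_j})$ has full column rank $r_1$ (which is possible since $r_1<r_2$). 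So the plan is to choose $x$ with $a_{b_i,t_l}=0$ for all $i,l$ (the $B_3$-coordinates vanish), with the $r_0\times r_1$ block $A=(a_{b_g,m_h})$ equal to $\begin{pmatrix} I_{r_1}\\ 0\end{pmatrix}$ (using $r_1\le r_0$... wait, $r_1<r_0$ here, fine), and with the $r_1\times r_2$ block $(a_{m_i,t_j})$ equal to $\begin{pmatrix} I_{r_1} & 0\end{pmatrix}$. A clean concrete choice: $x=\sum_{i=1}^{r_1}E_{b_i,m_i}+\sum_{i=1}^{r_1}E_{m_i,t_i}$.

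**Carrying it out.** With this $x$, I would compute $ad_x$ on the three families of basis vectors directly from $[E_{p,q},E_{q,r}]=E_{p,r}$. First, $[x,E_{m_i,t_j}] = E_{b_i,t_j}$ for $1\le i\le r_1$ and all $1\le j\le r_2$, giving $r_1 r_2$ elements $E_{b_i,t_j}$ in $\operatorname{im}(ad_x)$; these are linearly independent basis vectors from $B_3$. Second, $[x,E_{b_g,m_h}] = E_{b_g,t_h}$ when $1\le h\le r_1$, and $=0$ when $h>r_1$; the nonzero outputs $E_{b_g,t_h}$ ($1\le g\le r_0$, $1\le h\le r_1$) — note these include the new index range $r_1<g\le r_0$ — add $(r_0-r_1)r_1$ further independent basis vectors $E_{b_g,t_h}$ with $g>r_1$ that were not produced above (for $g\le r_1$ they duplicate earlier ones, but that is harmless). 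Third, one checks $[x,E_{b_i,t_j}]=0$, consistent with the $B_3$-coordinates of $x$ being zero. Collecting, $\operatorname{im}(ad_x)$ contains $\{E_{b_i,t_j} : 1\le i\le r_1,\ 1\le j\le r_2\}\cup\{E_{b_g,t_h}: r_1<g\le r_0,\ 1\le h\le r_1\}$, a set of $r_1 r_2 + (r_0-r_1)r_1 = r_1(r_0+r_2-r_1)$ distinct basis vectors, hence linearly independent. Therefore $\operatorname{rank}(ad_x)\ge r_1(r_0+r_2-r_1)$, and combined with Theorem~\ref{thm:pnrmub} the equality $b(L)=r_1(r_0+r_2-r_1)$ follows.

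**Main obstacle.** The computations above are routine; the only real work is making sure the chosen $x$ simultaneously saturates both bounds from the block analysis, i.e. that the outputs from the $B_2$-basis-vectors (giving the ``$s=r_1$'' pivot rows in $B_3$) and from the $B_1$-basis-vectors (giving the ``full rank $A$'' contribution) land in \emph{disjoint} coordinate directions so that no independence is lost in the overlap. With $A = \begin{pmatrix}I_{r_1}\\0\end{pmatrix}$ the two contributions overlap exactly on $\{E_{b_g,t_h} : g,h\le r_1\}$ and are otherwise disjoint, so the count of distinct basis vectors is exactly the target — I should double-check this overlap bookkeeping carefully, since an off-by-$r_1^2$ error there is the one place the argument could silently go wrong. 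A secondary check is confirming $x$ actually lies in $L$ (it does: each summand $E_{b_i,m_i}$, $E_{m_i,t_i}$ corresponds to a relation of $\mathcal{P}(r_0,r_1,r_2)$), and that $r_1<r_0$ is genuinely used so that the ``$g>r_1$'' range is nonempty.
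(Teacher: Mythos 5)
Your proposal is correct and follows essentially the same route as the paper: the paper also takes $x=\sum_{i=1}^{r_1}(E_{b_i,m_i}-E_{m_i,t_i})$ (same element up to an immaterial sign), computes the brackets with the $B_2$- and $B_1$-basis vectors, and does the identical inclusion--exclusion count $r_1r_2+r_0r_1-r_1^2$ on the resulting set of $E_{b_i,t_j}$'s. The only nit is that with your choice of $x$ one gets $[x,E_{b_g,m_h}]=-E_{b_g,t_h}$ rather than $+E_{b_g,t_h}$, which of course does not affect the span of the image.
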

\begin{proof}
Considering Theorem~\ref{thm:pnrmub}, it suffices to construct an element $x\in L$ for which $\text{rank}(ad_x)\ge r_1(r_0+r_2-r_1)$. Consider $$x=\sum_{i=1}^{r_1}(E_{b_i,m_i}-E_{m_i,t_i}).$$ Since $$[x, E_{m_i, t_j}]=E_{b_i, t_j}\in \text{im}(ad_x),$$ for $1\le i\le r_1$ and $1\le j\le r_2$, and $$[x, E_{b_i, m_j}]=E_{b_i, t_j}\in \text{im}(ad_x),$$ for $1\le i\le r_0$ and $1\le j\le r_1$, it follows that
$$\{E_{b_i,t_j}~|~1\le i\le r_1,1\le j\le r_2\}\cup \{E_{b_i,t_j}~|~1\le i\le r_0,1\le j\le r_1\}\subseteq \text{im}(ad_x).$$ As $$|\{E_{b_i,t_j}~|~1\le i\le r_1,1\le j\le r_2\}|=r_1r_2,$$ $$|\{E_{b_i,t_j}~|~1\le i\le r_0,1\le j\le r_1\}|=r_0r_1,$$ and $$|\{E_{b_i,t_j}~|~1\le i\le r_1,1\le j\le r_2\}\cap \{E_{b_i,t_j}~|~1\le i\le r_0,1\le j\le r_1\}|=|\{E_{b_i,t_j}~|~1\le i\le r_1,1\le j\le r_1\}|=r_1^2,$$ we find that $$|\{E_{b_i,t_j}~|~1\le i\le r_1,1\le j\le r_2\}\cup \{E_{b_i,t_j}~|~1\le i\le r_0,1\le j\le r_1\}|= r_0r_1+r_1r_2-r_1^2=r_1(r_0+r_2-r_1).$$ Thus, $$r_1(r_0+r_2-r_1)\le \textup{rank}(ad_x)\le b(L).$$ The result follows.
\end{proof}

Combining Theorems~\ref{thm:pnrme} and~\ref{thm:pnrmc} we arrive at the following.

\begin{theorem}\label{six}
If $L=\mathfrak{g}(r_0,r_1,r_2)$, then $$b(L)=\begin{cases}
r_1(r_0+r_2-r_1), & r_1<r_0,r_2; \\
r_0r_2, & r_1\ge r_0\text{ or }r_1\ge r_2.
 \end{cases}$$
\end{theorem}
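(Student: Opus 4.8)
The plan is to assemble Theorem~\ref{six} as a direct bookkeeping consequence of Theorems~\ref{thm:pnrme} and~\ref{thm:pnrmc}, which together already cover every triple $(r_0,r_1,r_2)$. First I would observe that the two hypotheses ``$r_1<r_0,r_2$'' and ``$r_1\ge r_0$ or $r_1\ge r_2$'' partition all possibilities: the negation of ``$r_1<r_0$ and $r_1<r_2$'' is exactly ``$r_1\ge r_0$ or $r_1\ge r_2$.'' So exactly one of the two branches applies in every case, and there is nothing to check for consistency on an overlap.

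In the first branch, $r_1<r_0$ and $r_1<r_2$, Theorem~\ref{thm:pnrmc} gives $b(L)=r_1(r_0+r_2-r_1)$ verbatim, so that line of the case statement is immediate. In the second branch, $r_1\ge r_0$ or $r_1\ge r_2$, Theorem~\ref{thm:pnrme} gives $b(L)=r_0r_2$, again verbatim. Thus the proof is essentially a one-line invocation: ``This follows immediately from Theorems~\ref{thm:pnrme} and~\ref{thm:pnrmc}, whose hypotheses partition all triples $(r_0,r_1,r_2)$.'' One could optionally add the sanity remark that when $r_1=r_0$ (say), the first-branch formula would read $r_0(r_0+r_2-r_0)=r_0r_2$, agreeing with the second branch on the ``boundary,'' which reassures the reader that the piecewise definition is continuous in the parameters, though this is not logically needed.

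There is no real obstacle here; the only thing to be careful about is the logical bookkeeping of the case split — making sure that ``not ($r_1<r_0$ and $r_1<r_2$)'' is correctly rewritten as ``$r_1\ge r_0$ or $r_1\ge r_2$'' via De Morgan — and confirming that Theorems~\ref{thm:pnrme} and~\ref{thm:pnrmc} are stated for Lie algebras over the same ground field and with the same conventions, which they are (both concern $L=\mathfrak{g}(r_0,r_1,r_2)$). So the write-up is:

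\begin{proof}
The conditions ``$r_1<r_0$ and $r_1<r_2$'' and ``$r_1\ge r_0$ or $r_1\ge r_2$'' are mutually exclusive and exhaustive among all triples $(r_0,r_1,r_2)$. If $r_1<r_0$ and $r_1<r_2$, then Theorem~\ref{thm:pnrmc} gives $b(L)=r_1(r_0+r_2-r_1)$. Otherwise, $r_1\ge r_0$ or $r_1\ge r_2$, and Theorem~\ref{thm:pnrme} gives $b(L)=r_0r_2$. This establishes the claimed formula.
\end{proof}
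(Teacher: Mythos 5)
Your proposal is correct and is exactly what the paper does: Theorem~\ref{six} is stated there with no separate proof beyond the remark that it follows by combining Theorems~\ref{thm:pnrme} and~\ref{thm:pnrmc}, whose hypotheses partition all triples $(r_0,r_1,r_2)$ by De~Morgan. Your additional boundary check that $r_1=r_0$ would give $r_0(r_0+r_2-r_0)=r_0r_2$ is a harmless but unnecessary sanity remark.
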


\begin{remark}\label{rem:notRelc}
Let $L=\mathfrak{g}(r_0,r_1,r_2)$. Note that 
\begin{itemize}
    \item if $r_1=1$, then $$b(L)=r_0+r_2-1= |\{p\prec q~|~p\prec q\notin Rel_E(\mathcal{P})\}|-1=\dim(L/ Z(L))-1$$ and 
    \begin{equation}\label{eqn:third}
        b(L)=r_0+r_2-1\le r_0r_2= |\{p\prec q~|~p\prec q\in Rel_{\overline{C}}(\mathcal{P})\}|=\dim([L,L]),
    \end{equation}
     where the inequality in \textup{(\ref{eqn:third})} is strict for $r_0,r_2>1$.
    
    \item if $1<r_1<r_0,r_2$, then $$b(L)=r_1(r_0+r_2-r_1)<r_1(r_0+r_2)-1= |\{p\prec q~|~p\prec q\notin Rel_E(\mathcal{P})\}|-1=\dim(L/ Z(L))-1$$ and 
    \begin{equation}\label{eqn:fifth}
        b(L)=r_1(r_0+r_2-r_1)< r_0r_2= |\{p\prec q~|~p\prec q\in Rel_{\overline{C}}(\mathcal{P})\}|=\dim([L,L]);
    \end{equation}
    the fact that the inequality in \textup{(\ref{eqn:fifth})} is strict follows by noting that $$r_0r_2=|\{(b_i,t_j)~|~1\le i\le r_0,~1\le j\le r_2\}|$$ and $$r_1(r_0+r_2-r_1)=|\{(b_i,t_j)~|~1\le i\le r_0,~1\le j\le r_2\}-\{(b_i,t_j)~|~r_1+1\le i\le r_0,~r_1+1\le j\le r_2\}|.$$
    
    \item if $r_1\ge r_0$ or $r_1\ge r_2$, then 
    \begin{equation}\label{eqn:sixth}
        b(L)=r_0r_2\le r_1(r_0+r_2)-1= |\{p\prec q~|~p\prec q\notin Rel_E(\mathcal{P})\}|-1=\dim(L/ Z(L))-1
    \end{equation}
     and $$b(L)= r_0r_2= |\{p\prec q~|~p\prec q\in Rel_{\overline{C}}(\mathcal{P})\}|=\dim([L,L]),$$ where the inequality in \textup{(\ref{eqn:sixth})} is strict when either $r_0>1$, $r_2>1$, or $r_1>r_0,r_2$.
\end{itemize}
\end{remark}

Considering Proposition~\ref{thm:breadthncov} and Remark~\ref{rem:notRelc}, it would be interesting to characterize those posets $\mathcal{P}$ for which $L=\mathfrak{g}^{\prec}(\mathcal{P})$ satisfies $b(L)=\dim([L,L])=|Rel_{\overline{C}}(\mathcal{P})|$. The following Theorem provides an obstruction to a poset $\mathcal{P}$ having the aforementioned property.

\begin{theorem}\label{thm:subpo}
Given a poset $\mathcal{P}$, let $\mathcal{Q}$ denote an induced subposet of $\mathcal{P}$ such that $[x,y]_{\mathcal{P}}\subset\mathcal{Q}$, for all $x,y\in\mathcal{Q}$ satisfying $x\prec_{\mathcal{Q}} y$. If $$b(\mathfrak{g}^{\prec}(\mathcal{Q}))<\dim([\mathfrak{g}^{\prec}(\mathcal{Q}),\mathfrak{g}^{\prec}(\mathcal{Q})]),$$ then $$b(\mathfrak{g}^{\prec}(\mathcal{P}))<\dim([\mathfrak{g}^{\prec}(\mathcal{P}),\mathfrak{g}^{\prec}(\mathcal{P})]).$$
\end{theorem}
\begin{proof}
Since for a poset $\mathcal{P}'$ one has $$[\mathfrak{g}^{\prec}(\mathcal{P}'),\mathfrak{g}^{\prec}(\mathcal{P}')]=\text{span}\{E_{p,q}~|~p\prec q\in Rel_{\overline{C}}(\mathcal{P}')\},$$ it follows that $$b(\mathfrak{g}^{\prec}(\mathcal{P}'))<\dim([\mathfrak{g}^{\prec}(\mathcal{P}'),\mathfrak{g}^{\prec}(\mathcal{P}')])$$ if and only if for all $y\in\mathfrak{g}^{\prec}(\mathcal{P}')$ there exists $p^{y}\prec q^{y}\in Rel_{\overline{C}}(\mathcal{P}')$ such that $E_{p^{y},q^{y}}\notin \text{im}(ad_{y})$. Thus, for any $y\in \mathfrak{g}^{\prec}(\mathcal{Q})$ there exists $q^{y}_1\prec q^{y}_2\in Rel_{\overline{C}}(\mathcal{Q})$ such that $E_{q^{y}_1,q^{y}_2}\notin \text{im}(ad_{y})$. 

Given $x\in\mathfrak{g}^{\prec}(\mathcal{P})$, let $x_{\mathcal{Q}}$ denote its restriction to $\mathfrak{g}^{\prec}(\mathcal{Q})$; that is, expressing $x$ in terms of the basis elements $E_{i,j}\in\mathfrak{g}^{\prec}(\mathcal{P})$, for $i\prec_{\mathcal{P}}j$, we form $x_{\mathcal{Q}}$ by removing all terms involving basis elements $E_{i,j}$ for $i\in\mathcal{P}\backslash\mathcal{Q}$ or $j\in\mathcal{P}\backslash\mathcal{Q}$. Take an arbitrary $x\in \mathfrak{g}^{\prec}(\mathcal{P})$. We claim that $E_{q^{x_{\mathcal{Q}}}_1,q^{x_{\mathcal{Q}}}_2}\notin \text{im}(ad_{x})$. Assume, toward contradiction, that $E_{q^{x_{\mathcal{Q}}}_1,q^{x_{\mathcal{Q}}}_2}\in \text{im}(ad_{x})$. Then there exists $p^*\in \mathfrak{g}^{\prec}(\mathcal{P})$ such that $[x,p^*]=E_{q^{x_{\mathcal{Q}}}_1,q^{x_{\mathcal{Q}}}_2}$. Note that for any $p\in\mathfrak{g}^{\prec}(\mathcal{P})$ we have $[x,p]=S_1+S_2$, where $$S_1=\sum_{i\prec j\in Rel_{\overline{C}}(\mathcal{Q})} a^p_{i,j}E_{i,j}\quad\quad\text{and}\quad\quad S_2=\sum_{\substack{i\prec j\in Rel_{\overline{C}}(\mathcal{P})\\i\in\mathcal{P}\backslash\mathcal{Q}\text{ or }j\in\mathcal{P}\backslash\mathcal{Q}}} a^p_{i,j}E_{i,j}.$$ Further, it must be the case that $[x_{\mathcal{Q}},p_{\mathcal{Q}}]=S_1$. If not, then there would exist $i,j\in\mathcal{Q}$ and $k\in\mathcal{P}\backslash\mathcal{Q}$ such that $i\prec k\prec j$, contradicting our assumption that $[i,j]_{\mathcal{P}}\subseteq\mathcal{Q}$. Therefore, $[x_{\mathcal{Q}},p^*_{\mathcal{Q}}]=E_{q^{x_{\mathcal{Q}}}_1,q^{x_{\mathcal{Q}}}_2}\in \text{im}(ad_{x_{\mathcal{Q}}})$, but this contradicts that $$b(\mathfrak{g}^{\prec}(\mathcal{Q})<\dim([\mathfrak{g}^{\prec}(\mathcal{Q}),\mathfrak{g}^{\prec}(\mathcal{Q})]).$$ The result follows.
\end{proof}

\begin{remark}
One can show that if $\mathcal{P}=\mathbf{4}\times\mathbf{4}$ and $L=\mathfrak{g}^{\prec}(\mathcal{P})$, then $b(L)<|Rel_{\overline{C}}(\mathcal{P})|$. Considering Theorem~\ref{thm:subpo}, it follows that if $\mathcal{P}=\mathbf{n}\times\mathbf{m}$ with $n,m>3$, then $b(L)<|Rel_{\overline{C}}(\mathcal{P})|$.
\end{remark}









\section{Epilogue} 

In this article, we focused on determining combinatorial methods for the computation of the breadth of Lie poset algebras and nilpotent Lie poset algebras. For Lie poset algebras, we found that in general breadth is given by the number of relations in the underlying poset; algebraically this value corresponds to the dimension of the associated algebra's derived algebra. In the case of nilpotent Lie poset algebras, we found that for some special families of posets the breadth is also given by the dimension of the algebra's derived algebra. However, we also found families of posets for which the breadth of the associated nilpotent Lie poset algebra is strictly less than the dimension of its derived algebra. Considering the above findings, the following question seems worth pursuing.
\bigskip

\noindent
\textbf{Question:} Does there exist a combinatorial characterization of those posets $\mathcal{P}$ for which  $$(\ast)~~~\mathfrak{g}^{\prec}(\mathcal{P})=\dim([\mathfrak{g}^{\prec}(\mathcal{P}),\mathfrak{g}^{\prec}(\mathcal{P})])=Rel_{\overline{C}}(\mathcal{P})~?$$ 
\bigskip

Theorem~\ref{thm:subpo} provides an obstruction to posets having Property $(\ast)$ and can be used to show that many well-known families posets do not have this property. For example,
\begin{itemize}
    \item $\mathbf{n}\times\mathbf{m}$, for $n,m>3$,
    \item the positive root poset of type $A_n$, for $n>6$,
    \item the positive root poset of type $B_n$ or $C_n$, for $n>3$, and
    \item the positive root poset of type $D_n$, for $n>4$,
\end{itemize}
On the other hand, data suggests the following conjecture.

\begin{conj}\label{conj:1}
If $\mathcal{P}=\mathbf{n}\times\mathbf{3}$ or the Boolean lattice $B_n$, for $n\ge 1$, and $L=\mathfrak{g}^{\prec}(\mathcal{P})$, then $b(L)=\dim([L,L])$.
\end{conj}

\noindent
Other than focusing on families of posets, one could also consider how Property $(\ast)$ behaves under various poset operations. Unfortunately, this is also seemingly wild. For example, recall that the \textit{Cartesian product of two posets $(\mathcal{P},\preceq_{\mathcal{P}})$ and $(\mathcal{Q},\preceq_{\mathcal{Q}})$} is the poset $(\mathcal{P}\times\mathcal{Q},\preceq_{\mathcal{P}\times\mathcal{Q}})$ on the set $\{(p,q)~|~p\in\mathcal{P},~q\in\mathcal{Q}\}$ such that $(p,q)\preceq_{\mathcal{P}\times\mathcal{Q}} (p',q')$ if $p \preceq_{\mathcal{P}} p'$ and $q \preceq_{\mathcal{Q}} q'$. Interestingly, one finds that
\begin{itemize}
    \item the cartesian product of two posets with Property $(\ast)$ can have Property $(\ast)$. For example, taking the cartesian product of the 3-chain with itself.
    \item the cartesian product of two posets with Property $(\ast)$ can not have Property $(\ast)$. For example, taking the cartesian product of the 4-chain with itself.
    \item using Theorem~\ref{thm:subpo}, if one of $\mathcal{P}$ or $\mathcal{Q}$ does not have Property $(\ast)$, then $\mathcal{P}\times\mathcal{Q}$ cannot have Property $(\ast)$.
\end{itemize}

Given two posets $\mathcal{P}$ and $\mathcal{Q}$, one encounters similar outcomes to that of the Cartesian product with respect to Property $(\ast)$ when forming the ordinal sum $\mathcal{P}\oplus\mathcal{Q}$, ordinal product $\mathcal{P}\otimes\mathcal{Q}$, and the poset $\widehat{\mathcal{P}}$ constructed from $\mathcal{P}$ by adjoining a new minimal and maximal element (see \textbf{\cite{Stanley}} for the definitions of these operations). It would seem that the only poset operations which behave nicely with respect to Property $(\ast)$ are forming the dual poset and the disjoint sum of a collection of posets. Recall that given posets $\mathcal{P}$ and $\mathcal{Q}$
\begin{itemize}
    \item the \textit{dual} of $\mathcal{P}$, denoted $\mathcal{P}^*$, is the poset on $\mathcal{P}$ where $i\preceq_{\mathcal{P}^*}j$ if and only if $j\preceq_{\mathcal{P}}i$, for all $i,j\in\mathcal{P}$.
    \item the \textit{disjoint sum} of $\mathcal{P}$ and $\mathcal{Q}$ is the poset $\mathcal{P}+\mathcal{Q}$ on the disjoint sum of $\mathcal{P}$ and $\mathcal{Q}$, where $s\preceq_{\mathcal{P}+\mathcal{Q}} t$ if either 
\begin{enumerate}
    \item[(i)] $s,t\in \mathcal{P}$ and $s\preceq_{\mathcal{P}} t$, or
    \item[(ii)] $s,t\in \mathcal{Q}$ and $s\preceq_{\mathcal{Q}} t$.
\end{enumerate}
\end{itemize}
For these operations it is straightforward to verify that
\begin{itemize}
    \item $\mathcal{P}$ has Property $(\ast)$ if and only if $\mathcal{P}^*$ has Property $(\ast)$, and
    \item $\mathcal{P}+\mathcal{Q}$ has Property $(\ast)$ if and only if both $\mathcal{P}$ and $\mathcal{Q}$ have Property $(\ast)$.
\end{itemize}
The above observations seem to suggest that a combinatorial characterization of Property $(\ast)$, if existent, would be very interesting.
\section{Appendix}

In this appendix, we prove parts (b) and (c) of Theorem~\ref{thm:breadthncov}.

\begin{customthm}{\ref{thm:breadthncov}}
\begin{enumerate}
    \item[\textup{(a)}] If $\mathcal{P}=C_n$ and $L=\mathfrak{g}^{\prec}(\mathcal{P})$, then $$b(L)=|Rel_{\overline{C}}(\mathcal{P})|=\frac{(n-1)(n-2)}{2}.$$
    \item[\textup{(b)}] If $\mathcal{P}=\mathbf{2\times n}$ and $L=\mathfrak{g}^{\prec}(\mathcal{P})$, then $$b(L)=|Rel_{\overline{C}}(\mathcal{P})|=\frac{(n-1)(3n-4)}{2}.$$
    \item[\textup{(c)}] If $\mathcal{P}=\mathcal{P}=T_m(n)$ and $L=\mathfrak{g}^{\prec}(\mathcal{P})$, then $$b(L)=|Rel_{\overline{C}}(\mathcal{P})|=\frac{(n-2)m^{n+1}+(1-n)m^n+m^2}{(m-1)^2}.$$
\end{enumerate}
\end{customthm}

We break the proofs of parts (b) and (c) into two lemmas.

\begin{lemma}\label{lem1}
For each of the following posets $\mathcal{P}$, if $L=\mathfrak{g}^{\prec}(\mathcal{P})$, then $b(L)=|Rel_{\overline{C}}(\mathcal{P})|$.
\begin{enumerate}
    \item[\textup{(a)}] $\mathcal{P}=\mathbf{2\times n}$
    \item[\textup{(b)}] $\mathcal{P}=T_m(n)$.
\end{enumerate}
\end{lemma}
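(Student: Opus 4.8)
The plan is to prove each part of Lemma~\ref{lem1} by the same two-sided strategy already used for part (a) of Theorem~\ref{thm:breadthncov}: the inequality $b(L)\le |Rel_{\overline{C}}(\mathcal{P})|$ is free from Proposition~\ref{prop:ubnil}(ii), so the entire content is to exhibit, for each poset, a single element $x\in L=\mathfrak{g}^{\prec}(\mathcal{P})$ whose adjoint image contains every $E_{p,q}$ with $p\prec q\in Rel_{\overline{C}}(\mathcal{P})$. Once such an $x$ is produced, $\operatorname{rank}(ad_x)\ge |Rel_{\overline{C}}(\mathcal{P})|$ and the sandwich closes. So the proof reduces to two explicit constructions plus verifications that are, in each case, telescoping-sum computations of the same flavor as in part (a).

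For part (a), $\mathcal{P}=\mathbf{2\times n}$, I would take $x$ to be the sum of the generators of all covering relations of the poset — concretely $x=\sum E_{u,v}$ over covering pairs $u\prec v$ in $\mathbf{2\times n}$ (the "edges'' of the Hasse diagram). The non-covering relations split into three natural types: relations internal to the bottom row $i_1\prec j_1$, relations internal to the top row $i_2\prec j_2$, and mixed relations $i_1\prec j_2$. For each non-covering relation $p\prec q$ one picks an intermediate element $r$ with $p\prec r\prec q$ and, as in part (a), builds a ``staircase'' combination $y=\sum \pm E_{\cdot,\cdot}$ of basis elements along a saturated chain so that $[x,y]$ telescopes to $E_{p,q}$. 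The key organizing point is to process the relations in order of increasing ``length'' (difference of coordinates) so that, when showing $\{E_{p,q}: p\prec q\in Rel_{\overline{C}}\}\subseteq \operatorname{im}(ad_x)$, each new $E_{p,q}$ is obtained modulo shorter ones already known to lie in the image; since those span a space whose dimension is exactly $|Rel_{\overline{C}}(\mathcal{P})|$, one concludes $\operatorname{rank}(ad_x)\ge |Rel_{\overline{C}}(\mathcal{P})|$. Finally I would compute $|Rel_{\overline{C}}(\mathbf{2\times n})|$ directly by counting relations by type and subtracting covering relations, arriving at $\tfrac{(n-1)(3n-4)}{2}$.

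For part (b), $\mathcal{P}=T_m(n)$, the same template applies with the Hasse diagram now a tree: take $x=\sum E_{u,v}$ over covering (parent–child) pairs. A non-covering relation in $T_m(n)$ is exactly a pair $p\prec q$ where $q$ is a descendant of $p$ that is not an immediate child, i.e. lies at distance $\ge 2$ below $p$ in the tree. For such a pair there is a \emph{unique} saturated chain from $p$ down to $q$, which is precisely what makes the tree case clean: one forms the alternating-sign ``path'' combination $y$ along that chain and checks $[x,y]=E_{p,q}+(\text{shorter terms})$, again ordering relations by chain-length so the argument is upper-triangular with respect to already-secured basis elements. The dimension count $|Rel_{\overline{C}}(T_m(n))|$ is a finite geometric-type sum: total number of strict relations (each ancestor–descendant pair) minus the number of covering relations; summing the count of descendants over all vertices level by level gives the stated closed form $\tfrac{(n-2)m^{n+1}+(1-n)m^n+m^2}{(m-1)^2}$.

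The main obstacle I anticipate is not conceptual but bookkeeping: verifying that in each construction the bracket $[x,y]$ really telescopes to the desired $E_{p,q}$ without leaving stray terms outside $\operatorname{span}\{E_{p',q'}: p'\prec q'\in Rel_{\overline{C}}\}$, and making the ``order by length'' induction airtight so that the spanning argument genuinely yields full rank $|Rel_{\overline{C}}(\mathcal{P})|$ rather than merely containing the set. For $\mathbf{2\times n}$ there is the extra subtlety that intermediate elements of a chain realizing a given non-covering relation may themselves be endpoints of $Rel_E(\mathcal{P})$-type relations, so one must be slightly careful that the staircase $y$ uses only legitimate basis elements of $L$; the two-row structure keeps this manageable. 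For $T_m(n)$ the uniqueness of chains removes that ambiguity, at the cost of a more involved index count for the closed-form dimension. I would relegate the routine telescoping verifications to displayed computations mirroring those in the proof of part (a).
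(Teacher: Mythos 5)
Your overall strategy coincides with the paper's: the upper bound is free from Proposition~\ref{prop:ubnil}(ii), and the whole content is an explicit element $x$ supported on covering relations whose adjoint image contains every $E_{p,q}$ with $p\prec q\in Rel_{\overline{C}}(\mathcal{P})$, followed by the enumeration of $Rel_{\overline{C}}(\mathcal{P})$. The one step that would fail as written is the organizing induction ``by increasing length.'' If $x$ is a sum of covering-relation matrices, then bracketing $x$ against a basis element $E_{w,z}$ produces matrices $E_{u,z}$ and $E_{w,v}$ indexed by relations one covering step \emph{longer} than $w\prec z$; hence, when you aim at a target $E_{p,q}$ by bracketing $x$ against a staircase of shorter chains, the surviving stray terms have the \emph{same} length as $E_{p,q}$, not a shorter one, and so are not ``already secured'' by your ordering. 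For $\mathcal{P}=\mathbf{2\times n}$ your specific choice of $x$ (the sum over \emph{all} covering relations, including the vertical edges $E_{i_1,i_2}$) produces exactly such uncancelled same-length residue: the staircase aimed at $E_{p_1,q_1}$ leaves behind terms of the form $E_{(p-i)_1,(q-i-1)_2}$. The paper's fix is to omit the vertical edges, taking $x=\sum_{i=1}^{n-1}\bigl(E_{i_1,(i+1)_1}+E_{i_2,(i+1)_2}\bigr)$, for which every staircase bracket telescopes exactly to $E_{p,q}$ with no residue. For $T_m(n)$ the paper does take $x$ to be the sum over all covering relations, but the induction runs on the level of the \emph{upper} endpoint, downward from level $n$: one computes $[x,E_{(mi-l)_{k+1},j_{n-d}}]=E_{i_k,j_{n-d}}-\sum_{t=0}^{m-1}E_{(mi-l)_{k+1},(mj-t)_{n-d+1}}$, and the residue, though of the same length, terminates one level higher and is handled by the previous step. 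These are repairs of bookkeeping rather than of concept, but your induction does not close as stated. (Also, your worry about the staircase using ``illegitimate'' basis elements is vacuous, since every $E_{u,v}$ with $u\prec v$ lies in $\mathfrak{g}^{\prec}(\mathcal{P})$; and the paper computes the two counts by recursion on $n$ in Lemma~\ref{lem2} rather than by your direct subtraction, though either works.)
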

\begin{proof}
In both cases, considering Proposition~\ref{prop:ubnil}(i), to establish the result it suffices to show that $$|Rel_{\overline{C}}(\mathcal{P})|\le b(L).$$
\bigskip

(a) For $\mathcal{P}=\mathbf{2\times n}$, to show that $|Rel_{\overline{C}}(\mathcal{P})|\le b(L)$ we construct $x\in L$ such that $E_{p,q}\in \text{im}(ad_x)$, for all $p\prec q\in Rel_{\overline{C}}(\mathcal{P})$. Consider $$x=\sum_{i=1}^{n-1}E_{i_1,(i+1)_1}+E_{i_2,(i+1)_2}.$$

\noindent
There are 4 groups of elements to consider.
\\*
 
\noindent
\textbf{Group 1}: $E_{1_1, p_k}$, for $1_1\prec p_k$, for $k=1,2$. If $1_1\prec p_k\in Rel_{\overline{C}}(\mathcal{P})$, then $1_1\prec (p-1)_k\prec p_k$ and $$[x,-E_{1_1,(p-1)_k}]=E_{1_1,p_k}\in \text{im}(ad_x).$$

\noindent
\textbf{Group 2}: $E_{1_2, p_2}$, for $1_2\prec p_2$. If $1_2\prec p_2\in Rel_{\overline{C}}(\mathcal{P})$, then $1_2\prec (p-1)_2\prec p_2$ and $$[x,-E_{1_2,(p-1)_2}]=E_{1_2,p_2}\in \text{im}(ad_x).$$

\noindent
\textbf{Group 3}: $E_{p_1, q_k}$, for $p_1\prec q_k$ where $k=1,2$ and $p\neq1$. If $p_1\prec q_k\in Rel_{\overline{C}}(\mathcal{P})$, then $p_1\prec (q-1)_k\prec q_k$ and $$\left[x,-\sum_{i=0}^{p-1}E_{(p-i)_1,(q-1-i)_k}\right]=\sum_{i=0}^{p-1}E_{(p-i)_1,(q-i)_k}-\sum_{i=0}^{p-2}E_{(p-1-i)_1,(q-1-i)_k}=E_{p_1,q_k}\in \text{im}(ad_x).$$

\noindent
\textbf{Group 4}: $E_{p_2, q_2}$, for $p_2\prec q_2$ where $p\neq1$. If $p_2\prec q_2\in Rel_{\overline{C}}(\mathcal{P})$, then $p_2\prec (q-1)_2\prec q_2$ and $$\left[x,-\sum_{i=0}^{p-1}E_{(p-i)_2,(q-1-i)_2}\right]=\sum_{i=0}^{p-1}E_{(p-i)_2,(q-i)_2}-\sum_{i=0}^{p-2}E_{(p-1-i)_2,(q-1-i)_2}=E_{p_2,q_2}\in \text{im}(ad_x).$$
\bigskip

\noindent
Thus, $$\text{im}(ad_x)\supset\{E_{p,q}~|~p\prec q\in Rel_{\overline{C}}(\mathcal{P})\}$$ so that $$|Rel_{\overline{C}}(\mathcal{P})|=\dim(\text{span}\{E_{p,q}~|~p\prec q\in Rel_{\overline{C}}(\mathcal{P})\})\le \textup{rank}(ad_x)\le b(L).$$ It follows that $b(L)=|Rel_{\overline{C}}(\mathcal{P})|$.
\bigskip

(b) For $\mathcal{P}=T_m(n)$, evidently the result holds for $n=1,2$. For $n>2$, to show that $|Rel_{\overline{C}}(\mathcal{P})|\le b(L)$ we construct $x\in L$ such that $E_{p,q}\in \text{im}(ad_x)$, for all $p\prec q\in Rel_{\overline{C}}(\mathcal{P})$. Consider $$x=\sum_{k=1}^{n-1}\sum_{i=1}^{m^{k-1}}\sum_{j=0}^{m-1} E_{i_k,(mi-j)_{k+1}}.$$ Note that $x=\sum E_{p,q}$, where the sum is over all covering relations $p\prec q$ in $\mathcal{P}$. Also note that $$\{p\prec q~|~p\prec q\in Rel_{\overline{C}}(\mathcal{P})\}=\{i_k\prec j_l~|~l-k>1\text{ and }m^{l-k}(i-1)+1\le j\le m^{l-k}i\}.$$ We will show that $E_{p,q}\in \text{im}(ad_x)$, for all $p\prec q\in Rel_{\overline{C}}(\mathcal{P})$, in $n-2$ steps.
\bigskip

\noindent
\textbf{Step 0}: $i_k\prec j_n$. If $i_k\prec j_n\in Rel_{\overline{C}}(\mathcal{P})$, then there exists $0\le l\le m-1$ such that $i_k\prec (mi-l)_{k+1}\prec j_n$ and $$[x,E_{(mi-l)_{k+1}, j_n}]=E_{i_k,j_n}\in \text{im}(ad_x).$$

\noindent
\textbf{Step d}: $i_k\prec j_{n-d}$. If $i_k\prec j_{n-d}\in Rel_{\overline{C}}(\mathcal{P})$, then there exists $0\le l\le m-1$ such that $i_k\prec (mi-l)_{k+1}\prec j_{n-d}$ and $$[x,E_{(mi-l)_{k+1}, j_{n-d}}]=E_{i_k,j_{n-d}}-\sum_{t=0}^{m-1}E_{(mi-l)_{k+1},(mj-t)_{n-d+1}}\in \text{im}(ad_x).$$ As a consequence of \textbf{Step $\mathbf{d-1}$} we may conclude that $\sum_{t=0}^{m-1}E_{(mi-l)_{k+1},(mj-t)_{n-d+1}}\in \text{im}(ad_x)$. So, $E_{i_k,j_{n-d}}\in\text{im}(ad_x)$.
\bigskip

\noindent
Thus, $$\text{im}(ad_x)\supset\{E_{p,q}~|~p\prec q\in Rel_{\overline{C}}(\mathcal{P})\}$$ so that $$|Rel_{\overline{C}}(\mathcal{P})|=\dim(\text{span}\{E_{p,q}~|~p\prec q\in Rel_{\overline{C}}(\mathcal{P})\})\le \textup{rank}(ad_x)\le b(L).$$ It follows that $b(L)=|Rel_{\overline{C}}(\mathcal{P})|$.
\end{proof}

\begin{lemma}\label{lem2}
\begin{enumerate}
    \item[\textup{(a)}] If $\mathcal{P}=\mathbf{2\times n}$, then $$|Rel_{\overline{C}}(\mathcal{P})|=\frac{(n-1)(3n-4)}{2}.$$
    \item[\textup{(b)}] If $\mathcal{P}=T_m(n)$, then $$|Rel_{\overline{C}}(\mathcal{P})|=\frac{(n-2)m^{n+1}+(1-n)m^n+m^2}{(m-1)^2}.$$
\end{enumerate}
\end{lemma}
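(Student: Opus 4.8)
Both parts are pure enumeration of non-covering strict relations in the respective posets; the strategy in each case is to count all strict relations and subtract the covering relations, or — often cleaner — to directly count, for each element $p$, the number of $q$ with $p \prec q$ a non-covering relation, and then sum. I will set up the count by organizing the poset by ``levels'': for $\mathbf{2\times n}$ the element $i_j$ sits at height $i$ (with a mild adjustment because the two columns interleave), and for $T_m(n)$ the element at rank $k$ has $q \succ p$ exactly when $q$ lies in the subtree below $p$ at some strictly lower rank.

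\medskip
\noindent\textbf{Part (a): $\mathcal{P} = \mathbf{2\times n}$.} Recall $i_j \prec i_{j+1}, (i+1)_j$ for $1\le i \le n-1$, $1\le j\le m-1$ (here $m=2$), together with $n_j \prec n_{j+1}$ and $i_m \prec (i+1)_m$. First I would identify $Rel(\mathcal{P})$ explicitly: $i_1 \prec k_1$ for $i<k$, $i_2\prec k_2$ for $i<k$, and the ``cross'' relations $i_1 \prec k_2$ for all $i \le k$ (using $i_1\prec i_2$ and the chains). This gives $|Rel(\mathcal{P})| = \binom{n}{2} + \binom{n}{2} + \binom{n+1}{2}$ (the last count being pairs $i\le k$). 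Then I would list the covering relations: $i_1 \prec (i+1)_1$, $i_2\prec (i+1)_2$, $i_1 \prec i_2$, for the appropriate ranges, which total $(n-1) + (n-1) + n = 3n-2$, after being careful at the ``ends'' $1_1$ and $n_2$ where the interleaving degenerates. Subtracting and simplifying $2\binom{n}{2} + \binom{n+1}{2} - (3n-2)$ should collapse to $\tfrac{(n-1)(3n-4)}{2}$; I would double-check against small cases $n=2$ (giving $1$) and $n=3$ (giving $5$).

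\medskip
\noindent\textbf{Part (b): $\mathcal{P}=T_m(n)$.} Here I would use the description already recorded in Lemma~\ref{lem1}(b): $i_k \prec j_l$ with $l > k$ iff $m^{l-k}(i-1)+1 \le j \le m^{l-k}i$, so each element at rank $k$ has exactly $m^{l-k}$ descendants at rank $l$ for $l > k$; of those, the $m$ descendants at rank $k+1$ are covering relations and all the rest are non-covering. Since there are $m^{k-1}$ elements at rank $k$, the count is
\begin{equation*}
|Rel_{\overline{C}}(\mathcal{P})| = \sum_{k=1}^{n-2} m^{k-1} \sum_{l=k+2}^{n} m^{l-k} = \sum_{k=1}^{n-2} m^{k-1}\bigl(m^{n-k+1} - m^2\bigr)\big/(m-1) \cdot \text{(geometric sum)},
\end{equation*}
so the remaining work is to evaluate a double geometric sum. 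I would compute the inner sum $\sum_{l=k+2}^{n} m^{l-k} = \tfrac{m^{n-k+1} - m^2}{m-1}$, substitute, and then evaluate $\sum_{k=1}^{n-2} m^{k-1}\cdot \tfrac{m^{n-k+1}-m^2}{m-1} = \tfrac{1}{m-1}\sum_{k=1}^{n-2}\bigl(m^{n} - m^{k+1}\bigr)$, which splits into $(n-2)m^n$ minus another geometric sum; a second application of the geometric series formula and common-denominator simplification should yield $\tfrac{(n-2)m^{n+1} + (1-n)m^n + m^2}{(m-1)^2}$. I would sanity-check with $m=2, n=3$ (formula gives $2$, matching the single non-covering relation pattern — actually checking carefully that $T_2(3)$ has the right value) and with $n=2$ (should give $0$).

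\medskip
\noindent\textbf{Main obstacle.} Neither part is conceptually deep; the real risk is bookkeeping. For part (a) the subtlety is the degenerate behavior of the interleaved grid labeling at the two extreme vertices, which makes the naive formulas for $|Rel(\mathcal{P})|$ and the number of covering relations slightly off unless one is careful — I expect to need to handle $1_1$ and $n_2$ (and the relation $n_j \prec n_{j+1}$, $i_m\prec(i+1)_m$) as special cases. For part (b) the obstacle is purely the two-step geometric-sum evaluation and getting the denominator $(m-1)^2$ to appear correctly; I would guard against index-shift errors by verifying the closed form at $n=2$ and $n=3$ before declaring victory.
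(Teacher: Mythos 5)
Your proposal is correct, but it takes a genuinely different route from the paper. The paper proves both parts by recursion: for (a) it builds $\mathbf{2\times n}$ from $\mathbf{2\times(n-1)}$ by adjoining $n_1$ and $n_2$ and counts the $2n-3$ and $n-2$ new non-covering relations incident to them; for (b) it uses the self-similarity $T_m(n)\setminus\{1_1\}\cong m\cdot T_m(n-1)$ to get $f_1(n)=mf_1(n-1)+m^2\frac{m^{n-2}-1}{m-1}$ and then \emph{verifies} that the stated closed form satisfies the same recursion and initial conditions. You instead enumerate directly: in (a) via $|Rel_{\overline{C}}|=|Rel|-|\text{coverings}|=2\binom{n}{2}+\binom{n+1}{2}-(3n-2)$, and in (b) via the double geometric sum $\sum_{k=1}^{n-2}m^{k-1}\sum_{l=k+2}^{n}m^{l-k}$. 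Both computations check out and yield the stated formulas; your approach has the advantage of \emph{deriving} the closed form in (b) rather than guessing and verifying it, while the paper's induction in (a) sidesteps having to describe the full transitive closure. Two small remarks: your worry about "degenerate ends" in (a) is unfounded --- the description $i_1\prec k_1$ ($i<k$), $i_2\prec k_2$ ($i<k$), $i_1\prec k_2$ ($i\le k$) and the covering count $3n-2$ are uniformly valid, since the poset is graded by $i+j$; and your sanity check for (b) at $(m,n)=(2,3)$ should give $4$ (the four relations $1_1\prec j_3$), not $2$ --- the formula and your double sum both correctly give $4$, so this is only a slip in the check, not in the derivation.
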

\begin{proof}
(a) By induction. Let $\mathcal{P}_n=\mathbf{2\times n}$ and $$f(n)=|Rel_{\overline{C}}(\mathcal{P}_n)|.$$ For $n=1$, direct computation shows that $f(1)=0$.  Assume the result holds for $1\le n-1$. Note that one can form $\mathcal{P}_n$ from $\mathcal{P}_{n-1}$ by adjoining a new maximal element $n_2$ satisfying $(n-1)_2\prec n_2$ as well as a new element $n_1$ satisfying $(n-1)_1\prec n_1\prec n_2$. Thus, $$f(n)=f(n-1)+|Rel_{\overline{C}}(\mathcal{P}_n,n_2)|+|Rel_{\overline{C}}(\mathcal{P}_n,n_1)|,$$ where $$|Rel_{\overline{C}}(\mathcal{P}_n,n_2)|=|\{j_1\prec n_2~|~1\le j\le n-1\}\cup \{j_2\prec n_2~|~1\le j\le n-2\}|=2n-3$$ and $$|Rel_{\overline{C}}(\mathcal{P}_n,n_1)|=|\{j_1\prec n_2~|~1\le j\le n-2\}|=n-2;$$ that is, $$f(n)=\frac{(n-2)(3n-7)}{2}+2n-3+n-2=\frac{(n-1)(3n-4)}{2}.$$ The result follows.
\bigskip

(b) Let $\mathcal{P}_n=T_m(n)$, $$f_1(n)=|Rel_{\overline{C}}(\mathcal{P}_n)|,$$ and $$f_2(n)=\frac{(n-2)m^{n+1}+(1-n)m^n+m^2}{(m-1)^2}.$$ By definition we have that $f_1(1)=f_1(2)=0$, $$f_2(1)=\frac{(1-2)m^{1+1}+(1-1)m^1+m^2}{(m-1)^2}=\frac{(-1)m^{2}+(0)m+m^2}{(m-1)^2}=\frac{-m^{2}+m^2}{(m-1)^2}=\frac{0}{(m-1)^2}=0,$$ and $$f_2(2)=\frac{(2-2)m^{2+1}+(1-2)m^2+m^2}{(m-1)^2}=\frac{(0)m^{3}+(-1)m^2+m^2}{(m-1)^2}=\frac{-m^{2}+m^2}{(m-1)^2}=\frac{0}{(m-1)^2}=0.$$ Thus, $f_1(n)=f_2(n)$, for $n=1,2$. For $n>2$, note that the induced poset defined by $\mathcal{P}_n\backslash\{1_1\}$ is the disjoint sum of $m$ copies of $\mathcal{P}_{n-1}$. Consequently, $$f_1(n)=mf_1(n-1)+|Rel_{\overline{C}}(\mathcal{P}_n,1_1)|,$$ where $$|Rel_{\overline{C}}(\mathcal{P}_n,1_1)|=|\{1_1\prec i_k~|~2<k\le n,1\le i\le m^{k-1}\}|=\sum_{k=2}^{n-1} m^{k}=m^2\left(\frac{m^{n-2}-1}{m-1}\right);$$ that is, $$f_1(n)=mf_1(n-1)+m^2\left(\frac{m^{n-2}-1}{m-1}\right).$$ Now, note that $$mf_2(n-1)+m^2\left(\frac{m^{n-2}-1}{m-1}\right)=m\left(\frac{(n-3)m^{n}+(2-n)m^{n-1}+m^2}{(m-1)^2}\right)+m^2\left(\frac{m^{n-2}-1}{m-1}\right)$$ $$=\frac{(n-3)m^{n+1}+(2-n)m^{n}+m^3}{(m-1)^2}+\frac{m^{n}-m^2}{m-1}$$ $$=\frac{(n-3)m^{n+1}+(2-n)m^{n}+m^3}{(m-1)^2}+\frac{m^{n+1}-m^n-m^3+m^2}{(m-1)^2}$$ $$=\frac{(n-2)m^{n+1}+(1-n)m^n+m^2}{(m-1)^2}=f_2(n).$$ Therefore, since $f_1(n)$ and $f_2(n)$ satisfy the same initial conditions and recursive relation, they are equal.
\end{proof}

Combining the results of Lemmas~\ref{lem1} and~\ref{lem2} establishes the remaining cases of Theorem~\ref{thm:breadthncov}.


\begin{thebibliography}{abcd}


\bibitem{CM}
V. Coll and N. Mayers. ``The index of Lie poset algebras." \textit{J. Combin. Theory Ser. A}, 177, 2021.

\bibitem{nil} V. Coll, N. Mayers, and N. Russoniello. ``The index of nilpotent Lie poset algebras." \textit{Linear Algebra Appl.}, 605: 118 -- 129, 2020.


\bibitem{CG}
V. Coll and M. Gerstenhaber. 
``Cohomology of Lie semidirect products and poset algebras." \textit{J. Lie Theory}, 26(1): 79--95, 2016.

\bibitem{D}
J. Dixmier. ``Enveloping Algebras." Vol. 14. Newnes, 1977.

\bibitem{Graaf}
W.A. de Graaf. ``Classification of solvable Lie algebras." \textit{Exp. Math.}, 14(1): 15--25, 2005.

\bibitem{dissertation}
B. Khuhirun. ``Classification of nilpotent Lie algebras with small breadth." Doctoral Dissertation, North Carolina State University. ProQuest Dissertations Publishing, 2014.

\bibitem{breadth1}
B. Khuhirun, K. Misra, and E. Stitzinger. ``On nilpotent Lie algebras of small breadth." \textit{J. Algebra}, 444: 328--338, 2015.

\bibitem{Green}
C. R. Leedham-Green, P.Neumann, and J. Wiegold.
``The breadth and the class of a finite p-group."
\textit{J. Lond. Math. Soc.}, 2(1): 409--420, 1969.

\bibitem{nilco}
L. Lampret and A. Vavpeti\v{c}. ``(Co)homology of Lie algebras via algebraic Morse theory." \textit{J. Algebra}, 463: 254--277, 2016.


\bibitem{Parm}
G. Parmeggiani and B. Stellmacher.``p-Groups of small breadth." \textit{J. Algebra}, 213(1): 52--68, 1999.

\bibitem{Remm}
E. Remm. ``Breadth and characteristic sequence of nilpotent Lie algebras." \textit{Comm. Algebra}, 45(7): 2956--2966, 2017.


\bibitem{Ro}
G. Rota. ``On the foundations of combinatorial theory I.  Theory of M\"obius functions." \textit{Probab. Theory Related Fields}, 2(4): 340--368, 1964.

\bibitem{Stanley}
R. Stanley. ``Enumerative Combinatorics, vol. 1." Cambridge Stud. Adv. Math., vol. 49, Cambridge University Press, Cambridge,
1997 





\end{thebibliography}
\end{document}